\DeclareMathOperator{\red}{red}
\DeclareMathOperator{\Res}{Res}
\theoremstyle{plain}
\newtheorem{theorem}{Theorem}[section]
\newtheorem*{theorem*}{Theorem}
\newtheorem{proposition}[theorem]{Proposition}
\newtheorem{corollary}[theorem]{Corollary}
\newtheorem{lemma}[theorem]{Lemma}
\theoremstyle{definition}
\newtheorem{definition}[theorem]{Definition}
\newtheorem{remark}[theorem]{Remark}
\newtheorem{example}[theorem]{Example}
\newcommand{\enm}[1]{\ensuremath{#1}}          %
\newcommand{\cal}[1]{\mathcal{#1}}
\newcommand{\II}{\enm{\mathbb{I}}}
\newcommand{\NN}{\enm{\mathbb{N}}}
\newcommand{\ZZ}{\enm{\mathbb{Z}}}
\newcommand{\PP}{\enm{\mathbb{P}}}
\newcommand{\VV}{\enm{\mathbb{V}}}
\newcommand{\TT}{\enm{\mathbb{T}}}
\newcommand{\UU}{\enm{\mathbb{U}}}
\newcommand{\Bb}{\enm{\cal{B}}}
\newcommand{\Dd}{\enm{\cal{D}}}
\newcommand{\Ii}{\enm{\cal{I}}}
\newcommand{\Oo}{\enm{\cal{O}}}
\newcommand{\Rr}{\enm{\cal{R}}}
\renewcommand{\phi}{\varphi}
\renewcommand{\theta}{\vartheta}
\renewcommand{\epsilon}{\varepsilon}
\begin{document}

\title{Finite $0$-dimensional multiprojective schemes and their ideals}
\thanks{Last updated: July 13, 2021}

\author[E. Ballico]{Edoardo Ballico}
\address{Dept. of Mathematics\\
 University of Trento\\
38123 Povo (TN), Italy}
\email{ballico@science.unitn.it}

\author[E. Guardo]{Elena Guardo}
\address{Dipartimento di Matematica e Informatica\\
		Viale A. Doria, 6 - 95100 - Catania, Italy}
	\email{guardo@dmi.unict.it} \urladdr{www.dmi.unict.it/~guardo}

\keywords{multiprojective space, multigraded rings, $0$-dimensional schemes,}
\subjclass[2020]{13D02, 13A15, 14M05,14B15}

\begin{abstract} We study finite $0$-dimensional  schemes in product of multiprojective space and their ideals. In particular, we describe the  set of generators  of the ideal defining a  $0$-dimensional scheme in the case  $\mathbb P^{1}\times\cdots  \times\mathbb P^{1}$  and in the case $\PP^{n_1}\times \cdots \times \PP^{n_k}$ with $n_i\in \{1,2\}$ for all $i$. We also check very ampleness for zero-dimensional schemes of general points in the multiprojective spaces.
\end{abstract}

\maketitle

\section{Introduction}

The study of the homogeneous ideal of projective varieties has given several deep results.  Rather than studying points in a single projective space $\mathbb{P}^n$, one can consider the study of a set of points in a multiprojective space $\mathbb P^{n_1}\times\cdots  \times\mathbb P^{n_r}.$  But the two settings can be significantly different. For example, one of the main differences between points in  $\mathbb{P}^n$, and points in $\mathbb P^{n_1}\times\cdots  \times\mathbb P^{n_r}$  with $r\geq 2$ is that in the former, any zero-dimensional scheme has a Cohen-Macaulay coordinate ring, while in the latter, this is no longer true. Properties of Arithmetically Cohen Macaulay (ACM)  sets of points in $\mathbb P^{1}\times\mathbb P^{1}$ and their combinatorial structure are collected and described in \cite{GVbook2015}.  Natural generalizations are the study  ACM sets of (reduced) points in $\mathbb P^{n}\times\mathbb P^{m}$ and $\mathbb P^{1}\times\cdots  \times\mathbb P^{1}$ that also can be characterized using their combinatorial structures (see for instance \cite{FGM2018, FM1,FM2, GuVT2008, LL, VT1,VT2}).
As we can see from the literature, most of the known results are about the ACM property.  It is connected to  classical problems of  algebraic geometry  related to the dimension of certain secant varieties of  Segre varieties, as in \cite{cco, cgg1} just to cite some of them.


A first step of studying $0$-dimensional  schemes in product of multiprojective spaces can be considered \cite{GMR1992}, where the authors investigated the structure of the Hilbert function of a $0$-dimensional  schemes $X$ in $\mathbb P^{1}\times\mathbb P^{1}$ and studied the relationship between the Hilbert function and the cohomology groups of the ideal sheaf $\mathcal I_X$. It followed also the study of the resolution of $0$-dimensional  schemes which are generic in  $\mathbb P^{1}\times\mathbb P^{1}$ (see for instance \cite{GMR1994, GMR1996}) and the beginning of the study of virtual (scheme-theoretic) complete intersections in $\mathbb{P}^1 \times \mathbb{P}^1$ in \cite{GMZ2013}.  

Inspired by recent results on scheme theoretic, virtual resolutions in a multigraded setting, such as \cite{BES, GLM, HNVT}, we will focus our study in describing some properties of a $0$-dimensional  schemes in product of multiprojective spaces $\PP^{n_1}\times \cdots \times \PP^{n_k}$. 
We aim to generalize some results known for  sets of distinct points in $\PP^{n_1}\times \cdots \times \PP^{n_k}$ and their Hilbert  functions that can be found in \cite{TesiAdam,VT1,VT2, VT3} to  $0$-dimensional  schemes. 
In those cited papers, Van Tuyl generalized a well known classical result about the eventual behaviour of the Hilbert function of a set of points in $\PP^n$ to sets of points in $\PP^{n_1}\times \cdots \times \PP^{n_k}$ introducing the notion of {\it border} of a Hilbert function of a set of distinct points in $\PP^{n_1}\times \cdots \times \PP^{n_k}$. It divides the values of the Hilbert function in  those values which need to be computed and those values which are related to the eventual behaviour of the Hilbert function.  In particular, Van Tuyl found the  Hilbert function $H_Z(a_1,\dots ,a_k)$  of a set of distinct points in $\PP^{n_1}\times \cdots \times \PP^{n_k}$  computing it for only a finite number of $(a_1,\dots ,a_{k})\in \NN^k$ . The other values of the $H_Z(a_1,\dots ,a_k)$ are then easily determined from Corollary 4.7 in \cite{VT1}.
In the literature, multigraded Hilbert functions of reduced  $0$-dimensional scheme  on toric varieties can be found in  \cite{msMultigraded} where the authors generalize some results on the Hilbert function  in \cite{VT1} and made some applications to Coding Theory.
In this paper we are interested in generalizing some of the cited known results on the multigraded Hilbert function from  sets of distinct points to  $0$-dimensional  schemes in product of multiprojective spaces $\PP^{n_1}\times \cdots \times \PP^{n_k}$.
 After introducing some definition and  results, in Sections \ref{generators} and \ref{generators2} we explicity describe the generators  of the ideal defining a  $0$-dimensional scheme in the case  $\mathbb P^{1}\times\cdots  \times\mathbb P^{1}$ (see Theorem \ref{bb1}) and in the case $\PP^{n_1}\times \cdots \times \PP^{n_k}$ with $n_i\in \{1,2\}$ for all $i$ (see Theorem \ref{p2p1}). 

Section \ref{BPfreeness} is devoted  to check very ampleness zero-dimensional schemes of general points in the multiprojective spaces, essentially as tangent vectors. Double points can be considered as
differentials and after Terracini lemma they had a prominent role. However, the original classical motivation (knowing if a
given linear system is base point free or very ample) may be proved even in cases in which the homogeneous ideal (or the
multihomogeneous ideal) is not generated at the degree (or the multidegree) in which we want to prove base-point freeness.

{\bf Acknowledgement} The second author was partially supported by Universit\`{a} degli Studi di Catania, "Progetto Piaceri 2020/2022 Linea di intervento 2". All the authors are members of GNSAGA of INdAM (Italy).




\section{General definitions and results}\label{notation}
In this section we collect a few results and many definitions which make sense for an arbitrary multiprojective space.

We work over an algebraically closed field $K$ of characteristic 0. In the following
we will always deal with a multiprojective space of $k > 0$ factors of the form
$X := \mathbb P^{n_1}\times\cdots  \times\mathbb P^{n_k}$ and we put  $\Rr := K[x_{ij}]$, $i=1,\dots ,k$, $0 \le j\le n_i$ .


If $k>1$ we set $X_i:= \prod _{j\ne i} \PP^{n_j}$ and  we denote by $\eta _i: X\to X_i$
 the projection which forgets the $i$-th coordinate of the points of $X$ and by $\pi_i: X\to \PP^{n_i}$ for $i=1,\dots ,k$ the projection of  X onto the $i$-th factor.  For $i=1,\dots ,k$ let $\epsilon _i$ (resp. $\hat{\epsilon}_i$) denote the multiindex $(a_1,\dots
,a_k)\in \NN^k$ with $a_i=1$ and $a_j=0$ for all $j\ne i$ (resp. $a_i=0$ and $a_j =1$ for all $j\ne i$). 



\begin{remark}\label{dd02}
The K\"{u}nneth formula gives $h^1(\Oo_X(a_1,\dots ,a_k))=0$ for all $(a_1,\dots ,a_k)\in \ZZ^k$ such that $a_i\ge -n_i$ for
all $i$. Thus $h^1(\Oo_X(a_1,\dots ,a_k))=0$ if $a_i\ge -1$ for all $i$.
\end{remark}

\begin{definition}\label{dd01}Let $Z$ be a zero-dimensional scheme of any projective variety $Y$. We will say that $Z$ has {\it
maximal rank} if for all $L\in \mathrm{Pic}(Y)$ either $h^0(\Ii_Z\otimes L)=0$ or $h^1(\Ii _Z\otimes L) =h^1(L)$. 
\end{definition}
\begin{remark}Take $Z$, $Y$ and $L\in \mathrm{Pic}(Y)$ as in Definition \ref{dd01}. A standard
exact sequence shows that $Z$ has maximal rank if and only if for each $L\in \mathrm{Pic}(Y)$ the restriction map $H^0(L)\to
H^0(L_{|Z})$ is either injective or surjective. If $Y$ is a multiprojective space, then $h^1(L)=0$ for all $L\in
\mathrm{Pic}(Y)$ and hence $Z$ has maximal rank if and only  for all $L\in \mathrm{Pic}(Y)$ either $h^0(\Ii_Z\otimes L)=0$ or
$h^1(\Ii _Z\otimes L) =0$.
\end{remark}   
\begin{remark}\label{0bg01}  Set $X := \mathbb P^{n_1}\times\cdots  \times\mathbb P^{n_k}$ and
let $W, Z\subset X$ be zero-dimensional schemes such that $W\subseteq Z$. Obviously $h^1(\Ii_W\otimes L) \le h^1(\Ii_Z\otimes L)$ for all $L\in \mathrm{Pic}(X)$.
\end{remark}

We will need the process of residuation with respect to a divisor. We can state this process
in complete generality for any projective variety $X$. For any zero-dimensional scheme, $Z\subset X$
and for any effective divisor $D$ on $X$ the \lq\lq residue of $Z$ w.r.t. $D$\rq\rq \ is the scheme $Res_{D}(Z)$ defined
by the ideal sheaf $\Ii_Z : \Ii_D$, where $\Ii_Z$ and $\Ii_D$ are the ideal sheaves of $Z$ and $D$, respectively.
The multiplication by local equations of $D$ defines the exact sequence of sheaves:
\begin{equation}\label{residue}
0 \rightarrow \Ii_{Res_{D}(Z)}(-D) \rightarrow \Ii_Z \rightarrow \Ii_{D\cap Z,D}\rightarrow 0
\end{equation}
\noindent where the rightmost sheaf is the ideal sheaf of $D\cap Z$ in $D$.

For any zero-dimensional subscheme of a multiprojective space $X$ let $\ll Z\gg$ denote the minimal multiprojective space
containing $Z$. Note that $\ll Z\gg = \prod_{i=1}^{k} \langle \pi _i(Z)\rangle$, where $\langle \ \ \rangle$ denote the linear
span.

Let $Z\subset X= \PP^{n_1}\times \cdots \times \PP^{n_k}$ be a zero-dimensional scheme and set 

\begin{eqnarray*}
\II(Z):= & \oplus _{(a_1,\dots,a_k)}H^0(\Ii_Z(a_1,\dots ,a_k)) \textrm{  as $\Rr$-modulo}\\  
\II_0(Z):=& \{(a_1,\dots ,a_k)\in \NN^n\mid h^0(\Ii_Z(a_1,\dots ,a_k))>0\} \\
\II_1(Z):=& \{(a_1,\dots ,a_k)\in \NN^k\mid h^1(\Ii_Z(a_1,\dots ,a_k))>0\}.
\end{eqnarray*}

We have that $Z$ has maximal rank if and only if $\II_0(Z)\cap \II_1(Z) =\emptyset$. 

\begin{definition}
For each $(a_1,\dots ,a_k)\in \NN^k$ the set $D(a_1,\dots ,a_k)$ of its {\it descendants} is the set of all $(b_1,\dots ,b_k)\in \NN^k$
such that $b_i\ge a_i$ for all $i$ and $b_i\ne a_i$ for at least one $i$.  
\end{definition}

A descendant $(b_1,\dots ,b_k)$ of $(a_1,\dots
,a_k)$ is an {\it immediate descendant} if $\sum _{i=1}^{k} (b_i-a_i) =1$, i.e. if $(b_1,\dots ,b_k)=(a_1,\dots ,a_k)+\epsilon
_i$ for some $i=1,\dots ,k$. Let $D_1(a_1,\dots ,a_k)$ denote the set of all immediate descendants of $(a_1,\dots ,a_k)$. Note
that $\#D_1(a_1,\dots ,a_k)=k$.  A multiindex $(c_1,\dots ,c_k)\in \NN^k$ is an {\it ancestor} (resp. a {\it
parent}) of $(a_1,\dots ,a_k)$ if $c_i\le a_i$ for all $i$ and $c_i\ne a_i$ for at least one $i$ (resp. $c_i\le a_i$ for all
$i$ and $\sum _{i=1}^{k} (a_i-c_i)=1$). Let  $\mathcal A(c_1,\dots ,c_k)$  (resp. $\mathcal A_1(c_1,\dots ,c_k)$) denote the set of all ancestors
(resp. parents) of $(c_1,\dots ,c_k)$. Note that $\mathcal A(c_1,\dots ,c_k)=\emptyset$
if and only if $c_i\neq 0$ for all $i$ and
$\# \mathcal A(a_1,\dots ,a_k)$ is the number of $i\in \{1,\dots ,k\}$ such that $a_i\ne 0$.

\begin{proposition}\label{0bg1}
Fix a zero-dimensional scheme $Z\subset X:= \PP^{n_1}\times \cdots \times \PP^{n_k}$, $i\in \{1,\dots ,k\}$ and $(a_1,\dots ,a_k)\in 	\II_1(Z)$. Then:
\begin{enumerate}
\item $h^1(\Ii _Z\otimes (a_1,\dots ,a_k)+\epsilon _i))\le h^1(\Ii_Z(a_1,\dots ,a_k))$.
\item $h^1(\Ii _Z\otimes (a_1,\dots ,a_k)+\epsilon _i))=h^1(\Ii_Z(a_1,\dots ,a_k))$  if there is $p\in \PP^{n_i}$ such that
$Z\subset \pi _i^{-1}(p)$.
\item We have $\lim _{t\to +\infty} h^1(\Ii _Z\otimes (a_1,\dots ,a_k)+t\epsilon _i))>0$ if and only if there is $p\in
\PP^{n_i}$ such that $h^1(\Ii_{\pi _i^{-1}(p)\cap Z}(a_1,\dots ,a_k)) >0$.\end{enumerate}\end{proposition}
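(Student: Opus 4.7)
The plan is to phrase all three parts in terms of the restriction map $\rho_b:H^0(\Oo_X(b))\to H^0(\Oo_Z(b))$. Since $h^1(\Oo_X(b))=0$ for $b\in\NN^k$ by Remark \ref{dd02}, the long exact sequence associated to $0\to \Ii_Z(b)\to \Oo_X(b)\to \Oo_Z(b)\to 0$ gives $h^1(\Ii_Z(b))=\deg(Z)-\mathrm{rank}(\rho_b)$, so each statement reduces to controlling $\mathrm{rank}(\rho_b)$. For (1), assume $n_i\ge 1$ (else $\epsilon_i$ is trivial and nothing to prove) and pick $\eta\in H^0(\Oo_X(\epsilon_i))$ not vanishing on the finite support of $Z$; then $\eta|_Z$ is a unit and $\cdot\eta$ is an isomorphism $\Oo_Z(a)\cong \Oo_Z(a+\epsilon_i)$. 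Together with the injection $\cdot\eta:H^0(\Oo_X(a))\hookrightarrow H^0(\Oo_X(a+\epsilon_i))$ this forms a commutative square that forces $\mathrm{rank}(\rho_{a+\epsilon_i})\ge\mathrm{rank}(\rho_a)$, giving (1). For (2), if $Z\subset\pi_i^{-1}(p)$ refine the choice: take a basis $y_i,y_{i,1},\ldots,y_{i,n_i}$ of $H^0(\Oo_{\PP^{n_i}}(1))$ with $y_i(p)\ne 0$ and $y_{i,\ell}(p)=0$, and set $\eta=\pi_i^*y_i$. Since the multiplication $H^0(\Oo_X(a))\otimes H^0(\Oo_X(\epsilon_i))\twoheadrightarrow H^0(\Oo_X(a+\epsilon_i))$ is surjective by Künneth, every section of the target has the form $\eta\cdot s_0+\sum_{\ell\ge 1}\pi_i^*y_{i,\ell}\cdot s_\ell$; the $\ell\ge 1$ summands vanish on $\pi_i^{-1}(p)\supset Z$, so $\mathrm{im}(\rho_{a+\epsilon_i})=\eta|_Z\cdot\mathrm{im}(\rho_a)$, giving equality of ranks and (2).

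For (3), let $\pi_i(Z)=\{q_1,\ldots,q_m\}$ and set $W_j=\pi_i^{-1}(q_j)\cap Z$; the fibers being disjoint, $Z=W_1\sqcup\cdots\sqcup W_m$ and $\Oo_Z(b)=\bigoplus_j\Oo_{W_j}(b)$, so $\rho_b$ splits coordinate-wise as $(\rho_b^{(j)})_j$. Part (2), iterated on each $W_j\subset\pi_i^{-1}(q_j)$, yields $\mathrm{rank}(\rho_{a+t\epsilon_i}^{(j)})=\mathrm{rank}(\rho_a^{(j)})$ for every $t\ge 0$. The trivial bound $\mathrm{rank}(\rho_b)\le\sum_j\mathrm{rank}(\rho_b^{(j)})$ is upgraded to equality for large $t$ by interpolation: for $t\ge m-1$ choose $f_j\in H^0(\Oo_{\PP^{n_i}}(t))$ with $f_j(q_j)\ne 0$ and $f_j(q_\ell)=0$ for $\ell\ne j$. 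Because $W_\ell\subset\pi_i^{-1}(q_\ell)$ gives $\pi_i^{-1}\mathfrak{m}_{q_\ell}\cdot\Oo_X\subset\Ii_{W_\ell}$, the pullback $\pi_i^*f_j$ annihilates $W_\ell$ for $\ell\ne j$ and restricts to a unit on $W_j$, so multiplying $H^0(\Oo_X(a))$ by $\pi_i^*f_j$ places $\mathrm{im}(\rho_a^{(j)})$ independently in the $j$-th summand of $\mathrm{im}(\rho_{a+t\epsilon_i})$. Hence $h^1(\Ii_Z(a+t\epsilon_i))=\sum_j h^1(\Ii_{W_j}(a))$ for $t\gg 0$. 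By (1) the sequence is non-increasing in $t$, so this sum is the limit, and it is positive iff some $h^1(\Ii_{W_j}(a))>0$, which (noting $\pi_i^{-1}(p)\cap Z=\emptyset$ for $p\notin\pi_i(Z)$) is exactly the stated criterion.

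The main technical obstacle is the scheme-theoretic content of (3): one must check that a degree-$t$ section $f_j$ vanishing only to first order at $q_\ell$ induces a pullback $\pi_i^*f_j$ that really is an element of $\Ii_{W_\ell}$ — not just a function vanishing on the support of the possibly fat subscheme $W_\ell$. This rests precisely on the containment $\Ii_{\pi_i^{-1}(q_\ell)}\subset\Ii_{W_\ell}$ coming from the fiber inclusion $W_\ell\subset\pi_i^{-1}(q_\ell)$, and is what allows the separating-section trick to work uniformly regardless of how non-reduced the $W_j$ are. Everything else is essentially bookkeeping around the identity $h^1(\Ii_Z(b))=\deg Z-\mathrm{rank}(\rho_b)$.
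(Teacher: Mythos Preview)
Your arguments for (1) and (2) are correct and amount to a repackaging of the paper's hyperplane-section argument in terms of the rank of the restriction map $\rho_b$; no issue there.

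Part (3), however, has a genuine gap. The decomposition $Z=W_1\sqcup\cdots\sqcup W_m$ with $W_j=\pi_i^{-1}(q_j)\cap Z$ is false whenever $Z$ carries infinitesimal structure in the $i$-th direction, i.e.\ whenever the scheme $\pi_i(Z)$ is non-reduced. Take $X=\PP^1\times\PP^1$ and $Z$ the length-$4$ scheme at the origin with local ideal $(x^2,y^2)$, where $x$ is the coordinate on the first factor. Then $\pi_1(Z)_{\red}=\{0\}$, so $m=1$, but $W_1=V(x,y^2)$ has length $2$, not $4$; thus $\Oo_Z\ne\Oo_{W_1}$ and your splitting of $\rho_b$ never gets off the ground. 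Worse, the formula you extract, $h^1(\Ii_Z(a+t\epsilon_i))=\sum_jh^1(\Ii_{W_j}(a))$ for $t\gg0$, is itself false in this example: with $a=(0,0)$ one computes $h^1(\Ii_Z(t,0))=4-2=2$ for all $t\ge1$, while $h^1(\Ii_{W_1}(0,0))=2-1=1$. Your final paragraph correctly worries about whether $\pi_i^*f_j\in\Ii_{W_\ell}$ and resolves it via $W_\ell\subset\pi_i^{-1}(q_\ell)$; but that containment is exactly what forces $W_\ell$ to be the \emph{reduced-fiber} slice, and then the $W_\ell$ no longer exhaust $Z$. You cannot have both at once.

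The paper handles the ``only if'' direction of (3) differently: it chooses a hyperplane $H'\subset\PP^{n_i}$ through some point of $\pi_i(Z)$ but not all of it, sets $H=\pi_i^{-1}(H')$, and uses the residual exact sequence for $Z$ with respect to $H$ together with induction on $\deg Z$ (the residual $\Res_H(Z)$ has strictly smaller degree). This peels off the horizontal infinitesimal structure one layer at a time, which is precisely what your interpolation argument cannot do when it only sees first-order vanishing. The ``if'' direction is immediate from $W_j\subseteq Z$ (Remark~\ref{0bg01}) together with your part (2) applied to $W_j$; you could salvage that half of (3) in one line, but the other half needs the inductive residual argument or an equivalent filtration.
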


\begin{proof}
Fix a hyperplane $H$ of $\PP^{n_i}$ such that $H\cap \pi _i(Z)=\emptyset$. Therefore  $D:= \pi _1^{-1}(H)\in
|\Oo_X(\epsilon _i)|$  and
$Z\cap D =
\emptyset$. Thus we have an exact sequence
\begin{equation}\label{eq0b1}
0 \to \Ii _Z(a_1,\dots ,a_k)\to \Ii _Z((a_1,\dots ,a_k)+\epsilon _i)\to \Oo_D((a_1,\dots ,a_k)+\epsilon _i)\to 0
\end{equation}
The long cohomology exact sequence of \eqref{eq0b1} gives $h^1(\Ii _Z\otimes (a_1,\dots ,a_k)+\epsilon _i)\le
h^1(\Ii_Z(a_1,\dots ,a_k))$. Obviously,  equality holds  if there is $p\in \PP^{n_i}$ such that $Z\subset \pi _i^{-1}(p)$ (the
variables $x_{ij}$, $0\le j\le n_i$ are constant on $Z$).

Assume the existence of $p\in \PP^{n_i}$ such that $h^1(\Ii_{\pi _i^{-1}(p)\cap Z}(a_1,\dots ,a_k)) >0$. Set $W:= \pi
_1^{-1}(p)\cap Z$. Since $W\subset \pi_i^{-1}(p)$, then $h^1(\Ii_W(a_1,\dots ,a_k)=h^1(\Ii _W\otimes (a_1,\dots ,a_k)+t\epsilon
_i)$ for all $t>0$. Using Remark \ref{0bg01} we get the `` if '' part of (3). Now assume that there is no such $p$. Thus $Z\ne
\emptyset$ and there is  $p\in \PP^{n_i}$ such that $L:= \pi _i^{-1}(p)$ meets $Z$, but does not contain $Z$. Hence there is a
hyperplane $H'\subset \PP^{n_i}$ such that $p\in H'$, but $\pi _i(Z)\nsubseteq H'$. Set $H:= \pi _i^{-1}(H')\in
|\Oo_X(\epsilon _i)|$. To prove part (3) we use induction on the integer
$\deg (Z)$. Consider the residual exact sequence

\begin{equation}\label{eq0b2}
0 \to \Ii _{\Res_H(Z)}((a_1,\dots ,a_k)+(t-1)\epsilon _i)\to \Ii _Z((a_1,\dots ,a_k)+t\epsilon _i) \to \Ii _{Z\cap
H,H}((a_1,\dots ,a_k)+t\epsilon _i)\to 0
\end{equation}
By assumption $h^1(H,\Ii _{Z\cap H,H}((a_1,\dots ,a_n)+t\epsilon _i)) =0$ for all $t\in \NN$. Since $\Res_H(Z)\subset Z$, 
Remark \ref{0bg01} gives  $h^1(\Ii_{\pi _i^{-1}(p)\cap \Res_H(Z)}(a_1,\dots ,a_k)) =0$  for all $p\in \PP^{n_i}$. Since $\deg
(\Res_H(Z))=\deg (Z)-\deg (Z\cap H)<\deg (Z)$ the inductive assumption gives $h^1(\Ii _{\Res_H(Z)}((a_1,\dots
,a_k)+(t-1)\epsilon _i)) =0$ for all $t\gg 0$. The long cohomology exact sequence of \eqref{eq0b1} conclude the proof of part
(3).
\end{proof}

\begin{remark} Part (1) of Proposition \ref{0bg1} is a generalization of Proposition 3.5 in \cite{VT1} and Lemma 3.4 in \cite{msMultigraded}.
\end{remark}

Part (1) of Proposition \ref{0bg1} may be extended to this more general set-up.

\begin{lemma}\label{abg1}
Let $Y$ be an integral projective variety and $R$ a line bundle on $Y$ such that $h^1(R)=0$ and $R$ is globally generated. Set
$M:= Y
\times \PP^m$ and call $\pi_1: M\to Y$ and $\pi_2: M\to \PP^m$ the projections. For any $t\in \ZZ$ set $R\boxtimes (t): =
\pi_1^\ast(R)\otimes \pi_2^\ast(\Oo _{\PP^m}(t))$. Let $Z\subset M$ be a zero-dimensional scheme such that there is $z\in \NN$
with $h^1(\Ii _Z\otimes R\boxtimes (z))=0$ and call $e$ the minimal such a natural number. Then $h^1(\Ii_Z\otimes R\boxtimes
(y)) =0$ for all $y\ge e$.
\end{lemma}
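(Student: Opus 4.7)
The plan is to imitate the argument of part~(1) of Proposition~\ref{0bg1}, with the general factor $Y$ replacing one of the projective spaces; the crucial extra input is the hypothesis $h^1(R)=0$, used via K\"unneth to control the cohomology of a hyperplane section.

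First, since $Z$ is zero-dimensional, $\pi_2(Z)\subset \PP^m$ is a finite set, so I can choose a hyperplane $H\subset \PP^m$ with $H\cap \pi_2(Z)=\emptyset$. Setting $D:=\pi_2^{-1}(H)\cong Y\times \PP^{m-1}$, one has $D\cap Z=\emptyset$, so multiplication by a local equation of $D$ produces, for every integer $t$, the short exact sequence
\[
0\to \Ii_Z\otimes R\boxtimes(t) \to \Ii_Z\otimes R\boxtimes(t+1)\to (R\boxtimes(t+1))_{|D}\to 0,
\]
in which $\Ii_Z|_D$ has been identified with $\Oo_D$ because $D\cap Z=\emptyset$, and I used that $\Oo_M(-D)\cong \pi_2^*\Oo_{\PP^m}(-1)$.

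Next, I would bound the cohomology of the rightmost term by K\"unneth. Applied to $D\cong Y\times \PP^{m-1}$ with $(R\boxtimes(t+1))_{|D}\cong \pi_1^*R\otimes \pi_2^*\Oo_{\PP^{m-1}}(t+1)$, K\"unneth gives
\[
h^1\!\bigl(D,(R\boxtimes(t+1))_{|D}\bigr)=h^1(R)\,h^0\!\bigl(\Oo_{\PP^{m-1}}(t+1)\bigr)+h^0(R)\,h^1\!\bigl(\Oo_{\PP^{m-1}}(t+1)\bigr).
\]
The first summand is zero by hypothesis, and the second is zero whenever $t+1\ge -1$ (trivially so when $m=1$, in which case $D\cong Y$ and the rightmost sheaf is just $R$), in particular for every $t\ge 0$.

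Finally, I would induct on $y\ge e$. The base case $y=e$ is the hypothesis. For the inductive step the long cohomology exact sequence of the displayed short exact sequence with $t=y$ yields
\[
h^1\!\bigl(\Ii_Z\otimes R\boxtimes(y+1)\bigr)\le h^1\!\bigl(\Ii_Z\otimes R\boxtimes(y)\bigr)+h^1\!\bigl(D,(R\boxtimes(y+1))_{|D}\bigr)=0,
\]
so the vanishing propagates from $y$ to $y+1$ and the lemma follows. The only point requiring care beyond Proposition~\ref{0bg1}(1) is the K\"unneth vanishing, which is precisely where the hypothesis $h^1(R)=0$ is used; global generation of $R$ does not appear to enter this particular argument, but is presumably retained for consistency with the applications.
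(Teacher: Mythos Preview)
Your proof is correct. It follows the residual-sequence approach of Proposition~\ref{0bg1}(1): pick a hyperplane $H\subset\PP^m$ with $\pi_2^{-1}(H)\cap Z=\emptyset$, use the resulting short exact sequence, and invoke K\"unneth on $D\cong Y\times\PP^{m-1}$ together with $h^1(R)=0$ to kill $h^1$ of the quotient.

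The paper's own proof of Lemma~\ref{abg1} takes a slightly different route. Instead of working only with a section of $\pi_2^*\Oo_{\PP^m}(1)$, it uses the global generation of $R$ to choose $\sigma\in H^0(R)$ nonvanishing on $\pi_1(Z_{\red})$, then forms $u=\pi_1^*\sigma\otimes\pi_2^*\tau\in H^0(R\boxtimes(1))$ with $u$ nowhere zero on $Z_{\red}$; multiplication by $u$ then shows directly that if $Z$ imposes $\deg(Z)$ independent conditions on $H^0(R\boxtimes(y-1))$ it does so on $H^0(R\boxtimes(y))$ as well. Your argument is arguably cleaner here and, as you note, does not use global generation of $R$ at all; the paper's version trades that extra hypothesis for an argument phrased at the level of sections rather than via a long exact sequence. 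Both rely on $h^1(R)=0$ through K\"unneth in the same way.
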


\begin{proof}
By the K\"{u}nneth formula  $H^0(R\boxtimes (t)) \cong H^0(R)\otimes H^0(\Oo _{\PP^1}(t))$ and $H^1(R\boxtimes (t)) \cong H^0(R)\otimes
H^1(\Oo_{\PP^m}(t)) \oplus H^1(R)\otimes H^0(\Oo_{\PP^m}(t))$. Thus $h^1(R\boxtimes (t)) =0$ for all $t\ge -1$. 

By the definition of the natural number $e$ the lemma is true for the integer $y=e$. Thus to prove part (1) we may assume $y>e$
and that part (1) is true for all integers
$t\in \{e,\dots ,y-1\}$. Since $R$ is globally generated and $Z_{\red}$ is a finite set, there is $\sigma \in H^0(R)$ not
vanishing at any $p\in u _1(Z_{\red})$. Thus $\sigma':= \pi_1^\ast (\sigma)$ vanishes at no point of $Z_{\red}$. Since
$\pi_2(Z_{\red})$ is finite, there is a hyperplane $H\subset \PP^m$ such that $H\cap \pi_2(Z)=\emptyset$. See $H$ as an element of
$|\Oo_{\PP^m}(1)|$ and call
$\tau$ a section of $H^0(\Oo_{\PP^m}(1))$ vanishing on $H$. Set $u:= \sigma'\boxtimes \pi_2^\ast (\tau)$. Note that $u\in
H^0(R\boxtimes (1))$ and vanishes at no point of $Z_{\red}$. Since $h^1(R\boxtimes (y-1)) =0$, $Z$ imposes $\deg (Z)$
independent conditions to $H^0(R\boxtimes (y-1))$. Since $u$ vanishes at no point of $Z_{\red}$, the multiplication by $u$
shows that $Z$ imposes $\deg (Z)$ independent conditions to $H^0(R\boxtimes (y))$.\end{proof}

\begin{remark}\label{ad1} Obviously every descendant of an element of $\II_0(Z)$ is an element of $\II_0(Z)$. Part (1) of
Proposition \ref{bg1} says that any descendant of an element of $\NN^k\setminus \II_1(Z)$ is an element of $\NN^k\setminus
\II_1(Z)$. If $a_i\ge \deg (Z)-1$ for all $i$, then $(a_1,\dots ,a_k)\notin \II_1(Z)$. Part (2) of Proposition \ref{bg1} says that
if $(a_1,\dots ,a_k)-\epsilon _i\notin \II_1(Z)$, then the multiplication map $H^0(\Ii _Z(a_1,\dots ,a_k))\otimes
H^0(\Oo_X(\epsilon _i)) \to H^0(\Ii _Z(a_1,\dots ,a_k)+\epsilon_i)$ is surjective and hence we do not need to take any element of $H^0(\Ii
_Z(a_1,\dots ,a_k)+\epsilon_i)$ to generate the multigraded ideal $\II_0(Z)$.
\end{remark}

\section{Case $X := \mathbb P^{1}\times\cdots  \times\mathbb P^{1}$}\label{generators}

In this section we take $X := \mathbb P^{1}\times\cdots  \times\mathbb P^{1}:= (\PP^1)^k$. In particular, in Theorem \ref{bb1} we explicity describe the generators of the multigraded ideal defining a $0$-dimensional scheme.  We start with the following result.
\begin{proposition}\label{bg1}
Let $Y$ be an integral projective variety and $R$ a line bundle on $Y$ such that $h^1(R)=0$ and $R$ is globally generated. Set $X:= Y
\times \PP^1$ and call $\pi _1: X\to Y$ and $\pi _2: X\to \PP^1$ the projections. For any $t\in \ZZ$ set $R\boxtimes (t): = \pi_1^\ast(R)\otimes \pi_2^\ast(\Oo _{\PP^1}(t))$.
Let $Z\subset X$ be a zero-dimensional scheme such that there is $z\in \NN$ with $h^1(\Ii _Z\otimes R\boxtimes (z))=0$ and call $e$ the minimal such a natural number.
For any $t\ge 0$ let $\mu _t: H^0(\pi _2^\ast (\Oo _{\PP^1}(1)) \otimes H^0(\Ii _Z\otimes \Oo_Y\boxtimes (t)) \to H^0(\Ii
_Z\otimes R\boxtimes (t+1))$ denote the multiplication map. Then
\begin{enumerate}
\item $h^1(\Ii_Z\otimes R\boxtimes (y)) =0$ for all $y\ge e$.
\item $\mu_y$ is surjective for all $y>e$.
\item $\dim \mathrm{coker}(\mu _e) = h^1(\Ii _Z\otimes R\boxtimes (e-1))$.
\item If $e=0$, then $\dim \mathrm{coker}(\mu _e) =\deg (Z)$.
\end{enumerate}
\end{proposition}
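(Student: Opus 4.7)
The plan is as follows. Part (1) is the $m=1$ case of Lemma \ref{abg1} and requires no new argument. For parts (2) and (3) my strategy is to compute $\dim\mathrm{coker}(\mu_y)$ uniformly for all $y\geq e$, via a short diagram chase using a fibre of $\pi_2$ chosen to miss $Z$.

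Concretely, since $\pi_2(Z)$ is finite, I pick $p\in\PP^1\setminus\pi_2(Z)$ and let $\tau\in H^0(\pi_2^\ast\Oo_{\PP^1}(1))$ cut out $D:=\pi_2^{-1}(p)$, extending it to a basis $\{\tau,\tau'\}$. Twisting the ideal sequence of $D$ by $\Ii_Z\otimes R\boxtimes(y+1)$, using $Z\cap D=\emptyset$ together with $R\boxtimes(y+1)|_D\cong R$ (the $\PP^1$-factor is trivial on the fibre $D\cong Y$), gives
\[0\to \Ii_Z\otimes R\boxtimes(y)\to \Ii_Z\otimes R\boxtimes(y+1)\to R\to 0.\]
For $y\geq e$, part (1) makes the restriction map $\rho:H^0(\Ii_Z\otimes R\boxtimes(y+1))\to H^0(R)$ surjective. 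The image of $f\mapsto\mu_y(\tau\otimes f)$ is precisely $\ker\rho$, while $\tau'|_D$ is a nowhere-vanishing section of the (trivialized) restricted bundle, so $\rho\bigl(\mu_y(\tau'\otimes f)\bigr)$ equals, up to a nonzero scalar, $f|_D\in H^0(R)$. Consequently $\mathrm{coker}(\mu_y)$ is canonically isomorphic to the cokernel of the restriction $H^0(\Ii_Z\otimes R\boxtimes(y))\to H^0(R)$.

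Applying the analogous sequence
\[0\to \Ii_Z\otimes R\boxtimes(y-1)\to\Ii_Z\otimes R\boxtimes(y)\to R\to 0\]
identifies that cokernel with $\ker\bigl(H^1(\Ii_Z\otimes R\boxtimes(y-1))\to H^1(\Ii_Z\otimes R\boxtimes(y))\bigr)$, and the target $H^1$ vanishes for $y\geq e$. Hence $\dim\mathrm{coker}(\mu_y)=h^1(\Ii_Z\otimes R\boxtimes(y-1))$ for all $y\geq e$; specialising to $y>e$ (so $y-1\geq e$) yields part (2), and specialising to $y=e$ yields part (3).

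For (4), with $e=0$, part (3) reduces the claim to $h^1(\Ii_Z\otimes R\boxtimes(-1))=\deg(Z)$. The K\"{u}nneth formula gives $h^0(R\boxtimes(-1))=h^1(R\boxtimes(-1))=0$ since $H^\ast(\Oo_{\PP^1}(-1))=0$, so the long exact sequence of $0\to\Ii_Z\otimes R\boxtimes(-1)\to R\boxtimes(-1)\to R\boxtimes(-1)|_Z\to 0$ collapses to $h^1(\Ii_Z\otimes R\boxtimes(-1))=h^0(\Oo_Z)=\deg(Z)$. The main technical point throughout is the clean identification of $\mathrm{coker}(\mu_y)$ with the cokernel of a single restriction map on $H^0$; this is exactly what the choice of $D$ disjoint from $Z$ and the basis $\{\tau,\tau'\}$ are engineered to deliver.
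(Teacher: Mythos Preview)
Your argument is correct. For part (1) you rightly invoke Lemma~\ref{abg1}; for (2)--(4) your two-step fibre argument is sound: the key identity $\mathrm{coker}(\mu_y)\cong\mathrm{coker}\bigl(H^0(\Ii_Z\otimes R\boxtimes(y))\to H^0(R)\bigr)$ follows exactly as you say from $\ker\rho\subset\mathrm{Im}(\mu_y)$ together with the surjectivity of $\rho$, and the second restriction sequence then converts this into $h^1(\Ii_Z\otimes R\boxtimes(y-1))$.

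The paper takes a different, somewhat more streamlined route for (2)--(3): it pulls back the Euler-type sequence $0\to\Oo_{\PP^1}(-1)\to\Oo_{\PP^1}^{\oplus 2}\to\Oo_{\PP^1}(1)\to 0$ via $\pi_2$ and twists by $\Ii_Z\otimes R\boxtimes(y-1)$, obtaining a single exact sequence whose $H^0$-surjection \emph{is} $\mu_{y-1}$ and whose long exact sequence reads off $\mathrm{coker}(\mu_{y-1})$ directly as $\ker\bigl(H^1(\Ii_Z\otimes R\boxtimes(y-2))\to H^1(\Ii_Z\otimes R\boxtimes(y-1))^{\oplus 2}\bigr)$. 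This avoids choosing a basis $\{\tau,\tau'\}$ or a specific fibre and handles everything in one exact sequence rather than two. Your approach, by contrast, is more explicit and geometric---it makes visible exactly which part of $\mathrm{Im}(\mu_y)$ comes from each basis vector---at the cost of a slightly longer chase. For part (4) your computation via $H^\ast(\Oo_{\PP^1}(-1))=0$ is cleaner than the paper's, which detours through the observation that $\pi_1$ embeds $Z$ into $Y$.
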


\begin{proof}
By the K\"{u}nneth formula  $H^0(R\boxtimes (t)) \cong H^0(R)\otimes H^0(\Oo _{\PP^k}(t))$ and $H^1(R\boxtimes (t)) \cong
H^0(R)\otimes H^1(\Oo_{\PP^k}(t)) \oplus H^1(R)\otimes H^0(\Oo_{\PP^k}(t))$. Thus $h^1(R\boxtimes (t)) =0$ for all $t\ge -1$. 

By the definition of the natural number $e$ part (1) is true for the integer $y=e$. Thus to prove part (1) we may assume $y>e$
and that part (1) is true for all integers
$t\in \{e,\dots ,y-1\}$. Since $R$ is globally generated and $Z_{\red}$ is a finite set, there is $\sigma \in H^0(R)$ not
vanishing at any $p\in \pi _1(Z_{\red})$. Thus $\sigma':= \pi _1^\ast (\sigma)$ vanishes at no point of $Z_{\red}$. Since $\pi
_2(Z_{\red})$ is finite, there is $q\in \PP^1\setminus \pi _2(Z_{\red})$. See $q$ as an element of $|\Oo_{\PP^1}(1)|$ and call
$\tau$ a section of $H^0(\Oo_{\PP^1}(1))$ vanishing on $q$. Set $u:= \sigma'\otimes \pi _2^\ast (\tau)$.
Note that $u\in
H^0(R\boxtimes (1))$ and vanishes at no point of $Z_{\red}$. Since $h^1(R\boxtimes (y-1)) =0$, $Z$ imposes $\deg (Z)$
independent conditions to $H^0(R\boxtimes (y-1))$. Since $u$ vanishes at no point of $Z_{\red}$, the multiplication by $u$
shows that 
 $Z$ imposes $\deg (Z)$ independent conditions to $H^0(R\boxtimes (y))$, concluding the proof of part (1).
 
 Consider the following exact sequence of vector bundles on $\PP^1$:
 \begin{equation}\label{eqb1}
 0 \to \Oo _{\PP^1}(-1) \to \Oo_{\PP^1}^{\oplus 2}\to \Oo_{\PP^1}(1)\to 0
 \end{equation}
Taking $\pi _2^\ast$ of \eqref{eqb1} we get an exact sequence of locally free sheaves on $X$. Thus twisting this exact sequence by $\Ii_Z\otimes R\boxtimes (y-1)$ we get the exact sequence
\begin{equation}\label{eqb2}
 0 \to \Ii _Z\otimes  R\boxtimes (y-2) \to (\Ii _Z\otimes R\boxtimes (y-1))^{\oplus 2}\to \Ii _Z\otimes R\boxtimes (y)\to 0
 \end{equation}
The $H^0$ of the surjection in \eqref{eqb2} is the map $\mu _y$. Part (1) and \eqref{eqb2} gives part (2). Now take $y=e+1$. The definition of $e$ gives $h^1(\Ii _Z\otimes  R\boxtimes (e)) =0$. If $e>0$ the definition of $e$ gives  $h^1(\Ii _Z\otimes  R\boxtimes (e-1)) \ne 0$. Thus \eqref{eqb1} gives part (3) for $e>0$. 
Now assume $e=0$. By assumption $h^1(\Ii _Z\otimes R\boxtimes (0)) =0$. Thus $\pi _1: X \to Y$ induces an embedding $\pi _{1|Z}: Z \to  Y$ and $h^1(Y,\Ii _{\pi _1(Z)}\otimes R)=0$. Since $H^0(R\boxtimes (-1)) =0$, $h^1(\Ii _Z\otimes R\boxtimes (-1))=\deg (Z)$. Thus (4) is true and prove the case $e=0$ of (3).
\end{proof}

\begin{theorem}\label{bb1}
Let $Z\subset X =(\PP^1)^k$ be a zero-dimensional scheme with maximal rank. Set $z:= \deg (Z)$. For any $(a_1,
\dots a_k)\in \II_0(Z)$ and any $i\in \{1,\dots ,k\}$ take a general linear subspace $W(a_1,\dots,a_k;i)\subset H^0(\Ii _Z((a_1,\dots ,a_k)+\epsilon _i))$ such that
$\dim W(a_1,\dots,a_k;i) = z+(a_i+2)\Delta _i -2\Delta$, where $\Delta:= \prod _{i=1}^{k} (a_i+1)$ and for any $S\subseteq \{1,\dots ,k\}$ $\Delta _S:= \Delta/(\prod _{i\in S}(a_i+1))$. Then

\quad (i) The multiplication map $\mu: H^0(\Oo _X(\epsilon _i)\otimes H^0(\Ii_Z(a_1,\dots ,a_k))\to H^0(\Ii_Z((a_1,\dots ,a_k)+\epsilon _i))$ has cokernel of dimension $z+(a_i+2)\Delta _i -2\Delta$
and $H^0(\Ii_Z((a_1,\dots ,a_k)+\epsilon _i))$ is generated by this cokernel and $W(a_1,\dots,a_k;i)$.

\quad (ii) The multigraded ideal $\II(Z)$ of $Z$ is generated by the direct sum $\TT$ of all $H^0(\Ii_Z(a_1,\dots ,a_k))$ and all $W(a_1,\dots,a_k;i)$, $(a_1,\dots ,a_k)\in \II_0(Z)$, $i\in \{1,\dots ,k\}$, except that if $(a_1,\dots ,a_k)+\epsilon _i
= (b_1,\dots ,b_k)+\epsilon _j$ for some others $(b_1,\dots ,b_k)\in \II_0(Z)$ we only take one subspace $W$ among all possible $(b_1,\dots ,b_k)$ and $\epsilon _j$.

\quad (iii) With the restriction on $\TT$ given in  part (ii) no proper subspace of $\TT$ generates $\II(Z)$.\end{theorem}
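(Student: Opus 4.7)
The plan is to derive (i) from a Koszul-type sequence on the $i$-th $\PP^1$ factor combined with the maximal-rank hypothesis, then deduce (ii) by induction on multidegrees, and finally establish (iii) via a dimension-count argument that exploits the generality of each $W(a_1,\dots,a_k;i)$.

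For (i), I would tensor the sequence $0 \to \Oo_{\PP^1}(-1) \to \Oo_{\PP^1}^{\oplus 2} \to \Oo_{\PP^1}(1) \to 0$, pulled back to $X$ via $\pi_i$, with $\Ii_Z(a_1,\dots,a_k)$ to obtain
\[
0 \to \Ii_Z((a_1,\dots,a_k)-\epsilon_i) \to \Ii_Z(a_1,\dots,a_k)^{\oplus 2} \to \Ii_Z((a_1,\dots,a_k)+\epsilon_i) \to 0,
\]
whose middle map is exactly $\mu$ after identifying $H^0(\Oo_X(\epsilon_i)) \cong K^2$. Since $(a_1,\dots,a_k)\in \II_0(Z)$ and $Z$ has maximal rank, $h^1(\Ii_Z(a_1,\dots,a_k))=0$, so the long exact sequence identifies $\mathrm{coker}(\mu)$ with $H^1(\Ii_Z((a_1,\dots,a_k)-\epsilon_i))$. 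A K\"unneth/Euler characteristic computation yields $\chi(\Ii_Z((a_1,\dots,a_k)-\epsilon_i)) = a_i\Delta_i - z$, and maximal rank at $(a_1,\dots,a_k)-\epsilon_i$ forces either $h^0=0$ (so $h^1 = z - a_i\Delta_i$) or $h^1=0$. Either way $\dim\mathrm{coker}(\mu) = \max(0, z+(a_i+2)\Delta_i-2\Delta)$; since $W(a_1,\dots,a_k;i)$ is a general subspace of that dimension it surjects onto the cokernel, giving $H^0(\Ii_Z((a_1,\dots,a_k)+\epsilon_i)) = \mathrm{image}(\mu) + W(a_1,\dots,a_k;i)$.

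For (ii), I induct on $|b|:=b_1+\dots+b_k$. If $(b)$ is minimal in $\II_0(Z)$, then $H^0(\Ii_Z(b))$ is itself a summand of $\TT$. Otherwise write $(b)=(a)+\epsilon_i$ with $(a)\in \II_0(Z)$; by (i) we have $H^0(\Ii_Z(b)) = \mathrm{image}(\mu) + W(a_1,\dots,a_k;i)$, and by induction $\mathrm{image}(\mu)$ lies in the $\Rr$-submodule generated by $\TT$ while $W(a_1,\dots,a_k;i)\subseteq \TT$. The single-$W$ convention is harmless because all candidate $W$'s at a common target degree are complements to the same multiplication image in $H^0(\Ii_Z(b))$.

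The main obstacle is (iii). My plan is to suppose $\TT'\subsetneq \TT$ still generates $\II(Z)$, and to pick a minimal multidegree $(b)$ at which $\TT'\cap \TT_b\subsetneq \TT_b$. Because $\Rr$ is non-negatively graded, the degree-$(b)$ component of the submodule generated by $\TT'$ is $\TT'_b$ plus the multiplication images from lower degrees, which by minimality of $(b)$ agree with those coming from $\TT$. For minimal $(b)\in \II_0(Z)$ there is no lower contribution, so $\TT'_b$ must equal $\TT_b = H^0(\Ii_Z(b))$. For non-minimal $(b)$, the chosen $\TT_b = W(a_1,\dots,a_k;i)$ is by construction a direct complement to the multiplication image from the selected parent, so any proper $\TT'_b$ leaves a genuine gap in $H^0(\Ii_Z(b))$. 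The generality of $W$ is precisely what prevents this gap from being accidentally filled by a multiplication image coming through an alternate parent sidelined by the "except that" clause; verifying this interplay between several multiplication routes is the most delicate step of the argument.
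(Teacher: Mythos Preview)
Your argument for (i) is exactly the paper's: both tensor the Euler sequence on the $i$-th $\PP^1$ factor with $\Ii_Z(a_1,\dots,a_k)$ and read off $\dim\mathrm{coker}(\mu)=h^1(\Ii_Z((a_1,\dots,a_k)-\epsilon_i))$ from maximal rank.

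For (ii) the paper organizes the induction differently. Rather than inducting on $|b|$, it first invokes Proposition~\ref{bg1}(2) to reduce to multidegrees $(b_1,\dots,b_k)$ lying in a unit cube $a_i\le b_i\le a_i+1$ over some $(a_1,\dots,a_k)\in\II_0(Z)$, and then inducts on $s=\#\{i:b_i=a_i+1\}$. For $s\ge 2$ it orders the incremented indices so that $a_{i_1}\le\cdots\le a_{i_s}$, sets $(c):=(b)-\epsilon_{i_s}$, and proves that the multiplication $\eta:H^0(\Oo_X(\epsilon_{i_s}))\otimes H^0(\Ii_Z(c))\to H^0(\Ii_Z(b))$ is \emph{surjective}; the key point is the inequality $\Delta_S\cdot\prod_{h<s}(b_{i_h}+1)\cdot a_{i_s}\ge\Delta$, which forces $h^1(\Ii_Z((c)-\epsilon_{i_s}))=0$. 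Your simpler induction on $|b|$ avoids this inequality by allowing a nonzero $W$ at every step, but your justification for the single-$W$ convention (``complements to the same multiplication image'') is wrong: different parents of $(b)$ give different images $\mathrm{Im}(\mu)$, in general of different dimensions. The easy fix is to use, at each non-minimal $(b)$, the specific parent whose $W$ was actually retained in $\TT$; with that choice your route to (ii) is valid and arguably cleaner than the paper's.

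For (iii) the paper argues, in the same breath as (i), that since $\mathrm{Im}(\mu)\cap W(a;i)=\{0\}$ no proper subspace of $W(a;i)$ together with $\mathrm{Im}(\mu)$ spans the target, and then simply asserts that the restriction in (ii) yields minimality of $\TT$. You go further by flagging the alternate-parent interaction as ``the most delicate step,'' but you do not resolve it; the paper does not spell out how contributions through sidelined parents are controlled either, so on this point you are at least as complete as the paper, and more candid about the gap.
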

\begin{proof}
In the statement of the theorem we wrote $\Delta _i$ instead of $\Delta _{\{i\}}$ and we will use this short-hand in the proof. Note that all $\Delta _S$ are positive integers. 

We first prove parts (i) and  (iii). 

{\em Step (a)} A basis of each $H^0(\Ii_Z(a_1,\dots ,a_k))$, $(a_1,\dots ,a_k)\in \II_0(Z)$, is contained in any minimal set of multigraded generators of $\II(Z)$ (Remark \ref{ad1}).

Since $Z$ has maximal rank and $(a_1,\dots a_k)\in \II_0(Z)$, $h^1(\Ii _Z(a_1,\dots ,a_k)) =0$, $h^0(\Ii _Z(a_1,\dots ,a_k)) =\Delta -z>0$, $h^0(\Ii _Z(a_1,\dots ,a_k)-\epsilon _i) =0$
and $h^1(\Ii _Z(a_1,\dots ,a_k)-\epsilon _i) =z-a_i\Delta/(a_i+1)$. We consider the exact sequence
\begin{equation}\label{eqbb1}
0 \to \Ii_Z\otimes ((a_1,\cdots ,a_k)-\epsilon _i)\to H^0(\Oo _X(\epsilon _i)\otimes \Ii _Z(a_1,\dots a_k)\to \Ii_Z\otimes ((a_1,\cdots ,a_k)+\epsilon _i)\to 0
\end{equation}
Since $h^0(\Ii_Z\otimes ((a_1,\cdots ,a_k)-\epsilon _i))=0$, $\mu$ is injective and hence
$\dim \mathrm{Coker}(\mu) = (a_i+2)\Delta _i -z -2(\Delta -z) = z+(a_i+2)\Delta _i -2\Delta$. Since $W(a_1,\dots,a_k;i)$ is  general and $\dim W(a_1,\dots ,a_k;i) =\dim \mathrm{Coker}(\mu)$, we have  $H^0(\Ii_Z\otimes ((a_1,\cdots ,a_k)+\epsilon _i))=
\mathrm{Im}(\mu) +W(a_1,\dots ,a_k;i)$ and $\mathrm{Im}(\mu) \cap W(a_1,\dots ,a_k)=\{0\}$. Thus no proper subspace of $W(a_1,\dots,a_k;i)$ together with $\mathrm{Im}(\mu)$ spans  $H^0(\Ii_Z\otimes ((a_1,\cdots ,a_k)+\epsilon _i))$.
The restriction we gave in $\TT$ on the $W$'s we take means that no proper subset of a multigraded basis of $\TT$ generates $\II(Z)$, concluding the proof of part (iii).

{\em Step (b)} We prove part (ii) and, hence, we conclude the proof of the theorem. 

Let $\UU$ be the linear subspace of $\II(Z)$ generated by $\TT$. By part (2) of Proposition \ref{bg1} it is sufficient to prove that $\UU$ contains
all linear spaces $H^0(\Ii _Z(b_1,\dots ,b_k))$ such that there is $(a_1,\dots ,a_k)\in \II_0(Z)$ with $a_i\le b_i\le a_i+1$ for all $i$ and $(b_1,\dots ,b_k)\ne (a_1,\dots ,a_k)$. Fix any such $(b_1,\dots ,b_k)$ and set $S:= \{i\in \{1,\dots ,k\}\mid b_i=a_i+1\}$.
Step (a) proved the case $\#S =1$. Thus we may assume $s:= \#S \ge 2$ and use induction on the integer $s$. We order the elements $\{i_1,\dots ,i_s\}$ of $S$ so that $a_{i_1}\le \cdots \le a_{i_s}$. Set $S':= S\setminus \{i_s\}$.
Set $c_i:= b_i$ for all $i\ne i_s$ and $c_{i_s} =a_{i_s}$. By the inductive assumption $H^0(\Ii _Z(c_1,\dots,c_k))\subset \UU$. Thus is it sufficient to prove that the multiplication map $\eta : H^0(\Oo_X(i_s))\otimes H^0(\Ii_Z(c_1,\dots ,c_k))
\to H^0(\Ii _Z(b_1,\dots ,b_k))$ is surjective. Since $a_{i_s} \ge a_{i_1}$ and $b_{i_1} =a_{i_1}+1$ we have $\prod _{h=1}^{s-1} (b_{i_h}+1)\times a_{i_s}\ge \prod _{h=1}^{s} (a_h+1)$. Thus $\Delta_S\times \prod _{h=1}^{s-1} (b_{i_h}+1)\times a_{i_s}\ge \Delta$.
Since $Z$ has maximal rank and $z< \Delta$, $h^1(\Ii_Z((c_1,\dots ,c_k))-\epsilon _{i_s}))=0$. Part (2) of Proposition \ref{bg1} gives that $\eta$ is surjective.
\end{proof}



\section{General subsets of some multiprojective spaces} \label{generators2}

In this section we take  $X:= \PP^{n_1}\times \cdots \times \PP^{n_k}$ with $n_i\in \{1,2\}$ for all $i$. In particular, in Theorem \ref{p2p1} we explicity describe the generators of the multigraded ideal defining a $0$-dimensional scheme.  We start with the following result.
\begin{remark}\label{kk2}
Let $Y$ be an integral projective variety, $M$ an effective Cartier divisor of $Y$, $E_1$ a vector bundle on $Y$ and $E_2$ a
vector bundle on $M$. Assume the existence of a surjection $\phi: E_{1|M} \to E_2$ and call $\psi$ the composition of $\phi$ with
the restriction map $E_1\to E_{1|M}$. Note that $\psi : E_1\to E_2$ is a surjection of coherent $\Oo_Y$-sheaves in which we see the $\Oo_M$-sheaf $E_2$ as a
torsion $\Oo_Y$-sheaf. Set
$E_0:=
\mathrm{ker}(\psi)$. $E_o$ is called the {\it elementary transformation} of $E_1$ with respect to $\phi$ (\cite[beginning of \S 2]{hs}, \cite[Th. 1.4]{mar}). It is known that
$E_0$ is locally free (\cite{mar}). Set $r:= \mathrm{rank}(E_1)$ and $k:= \mathrm{rank}(E_2)$. Since $\phi$ is surjective, $r\ge k$ and $E_3:= \mathrm{ker}(\phi)$ is a rank $(r-k)$ vector bundle on $M$, with $E_3$ the zero-sheaf if $r=k$, which occurs if and only if $E_2=E_{1|M}$. The rank $r$ vector bundle $E_{0|M}$ on $M$ fits in the exact sequence
\begin{equation}\label{eqke2}
0 \to E_2(-M) \to E_{0|M} \to E_3\to 0
\end{equation}
 (\cite[beginning of \S 2]{hs}, \cite[Theorem 1.4]{mar}) in which $E_2(-M):= E_2\otimes _{\Oo_D}\Oo_D(-D)$.
Fix an integer $s>0$. Suppose you want to prove that $h^0(\Ii _S\otimes E) =\max \{0,h^0(E_1)-rs\}$ for a general $S\subset Y$ with $\#S=s$. Suppose you want to prove it using the semicontinuity theorem for cohomology specializing a general $S$ to a general $A\cup B$ with $A$ general in $Y$,  $B$ general in $M$ and $\#B$ as high as possible. The problem is that in the set-up we need $E_{1|M} \cong E_2\oplus F$ with $F$ a rank  $(r-k)$ vector bundle and $h^0(E_2)/k < h^0(F)/(r-k)$. In our set-up we will have $k=1$ and hence  $E_2$ is a line bundle on $M$ with $h^1(M,E_2)=0$. Thus to get $h^i(D,\Ii _{B,D}\otimes F)=0$, $i=0,1$, it is sufficient to take  $B$ as a general subset of $M$ with cardinality $h^0(D,M)$. However, $h^0(M,\Ii_{B,M}\otimes F')
\ge h^0(F)/(r-1)>0$
 and hence to conclude it is not sufficient to use that $h^0(\Ii _A\otimes E_1(-M)) =
 \max \{0,h^0(E_1(-M)) -r(s-h^0(E_2))$ for a general $A\subset Y$ with $\#A=s-h^0(E_2)$. 
Set $\VV:= \PP E_0$ and call $\Oo_{\VV}(1)$ the tautological line bundle on $\VV$  and $\pi : \VV\to Y$ the projection. We have $h^0(\Oo_{\VV}(1))) =h^0(E_0)$. For any $p\in Y$, $\pi^{-1}(p)$ is an $(r-1)$-dimensional projective space and $h^0(Y,\Ii _p\otimes E_0) = h^0(\VV,\Ii_{\pi^{-1}(p)}(1))$ and a similar relation holds for any finite subset of $Y$. Thus a point of $A$ should give $r$ independent conditions to $H^0(\VV,\Oo _{\VV}(1))$.
 $B$ gave $h^0(E_2)$ independent conditions to $H^0(E_2)$ and it should give  $(r-b)$ further conditions to $H^0(\VV,\Oo _{\VV}(1))$. In \cite[Lemma 5.1.1]{ida}, it is $k=1$ and $r=3$. From now on we assume $k=1$. We will only need the case $r=2$. Assume $r=2$. As in \cite{ida}, we call the point of $\VV$ corresponding to some $p\in M$ an s-point. To prove that $h^0(\Ii _S\otimes E_1) =\max \{0,h^0(E)-2s\}$ it would be sufficient to prove that $h^0(\VV,\Ii _{A\cup B'}(1)) = \max \{0,h^0(E_0)-2\#A-h^0(E_2)$, where $B'$ is a general union of $h^0(E_2)$  s-points.
\end{remark}

\begin{remark}\label{ke3}
The Euler's sequence 
$$0 \to \Omega ^1_{\PP^2}(1) \to \Oo_{\PP^2}^{\oplus 3} \to \Oo_{\PP^2}(1)\to 0$$ gives $h^1(\Omega ^1_{\PP^2}(x)) =0$ for all $x\ne 0$, $h^0(\Omega ^1_{\PP^2}(x)) =0$ for all $x\le 1$ and $h^0(\Omega ^1_{\PP^2}(x)) =3\binom{x+1}{2} -\binom{x+2}{2}
= x^2-1$ for all $x\ge 2$. Let $C\subset \PP^2$ be a smooth conic. By \cite[Lemma 1.3]{b0} $\Omega ^1_{\PP^2}(x)_{|C}$ is a direct sum of two line bundles of degree $2x-3$.
Thus $h^0(C,\Ii _S\otimes \Omega ^1_{\PP^2}(x)_{|C}) =\max\{0,2x-1-2s\}$ for all $x\ge 2$ and $h^1(C,\Ii _S\otimes \Omega ^1_{\PP^2}(x)_{|C}) =\max\{0,2s-2x+2\}$ for
any finite set $S\subset C$ such that $\#S =s$.
\end{remark}

\begin{proposition}\label{bg2}
Let $Y$ be an integral projective variety and $R$ a line bundle on $Y$ such that $h^1(R)=0$, $\alpha:= h^0(R)>0$ and $R$ is globally generated. Set $X:= Y
\times \PP^2$ and call $\pi _1: X\to Y$ and $\pi _2: X\to \PP^2$ the projections. For any $t\in \ZZ$ set $R\boxtimes \Omega ^1_{\PP^2}(t): = \pi_1^\ast(R)\otimes \pi_2^\ast(\Omega ^1_{\PP^2}(t))$.

\quad (a) We have $h^0(R\boxtimes \Omega ^1_{\PP^2}(x))=\alpha(x^2-1)$ for all integers $x\ge 2$, $h^0(R\boxtimes \Omega ^1_{\PP^2}(1)) =0$ and  $h^1(R\boxtimes \Omega ^1_{\PP^2}(x))=0$ for all $x\ge 1$.

\quad (b) Fix an integer $s>0$ and let $S\subset X$ be a general subset with cardinality $s$. Then $h^0(\Ii_S\otimes R\boxtimes \Omega ^1_{\PP^2}(x))=\max \{0,\alpha(x^2-1)-2s\}$ and  $h^1(\Ii_S\otimes R\boxtimes \Omega ^1_{\PP^2}(x))=\max \{0,2s-\alpha(x^2-1)\}$ for all $x\ge 1$.
\end{proposition}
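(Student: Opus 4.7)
Part (a) is a direct K\"unneth calculation combined with Remark \ref{ke3}. Since $\pi_1^\ast R$ and $\pi_2^\ast \Omega^1_{\PP^2}(x)$ are pullbacks from the two factors, K\"unneth gives $h^i(R\boxtimes \Omega^1_{\PP^2}(x)) = \sum_{p+q=i} h^p(R)\,h^q(\Omega^1_{\PP^2}(x))$. Remark \ref{ke3} supplies $h^0(\Omega^1_{\PP^2}(x))=x^2-1$ for $x\ge 2$, $h^0(\Omega^1_{\PP^2}(1))=0$, and $h^1(\Omega^1_{\PP^2}(x))=0$ for $x\ne 0$. Plugging in, together with $h^1(R)=0$ and $h^0(R)=\alpha$, yields the three formulas in (a).

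For part (b) I would argue by induction on $x$, with step $2$, using the residue sequence for the divisor $D:=\pi_2^{-1}(C)\cong Y\times C$, where $C\subset\PP^2$ is a smooth conic. Since $\mathcal{O}_X(D)=\pi_2^\ast\mathcal{O}_{\PP^2}(2)$, the bundle $E:=R\boxtimes \Omega^1_{\PP^2}(x)$ satisfies $E(-D)=R\boxtimes \Omega^1_{\PP^2}(x-2)$, which is the inductive case. Moreover, by Remark \ref{ke3} the restriction $\Omega^1_{\PP^2}(x)_{|C}$ is a direct sum of two line bundles of degree $2x-3$ on $C\cong\PP^1$, so
\[
E_{|D}\;\cong\;\bigl(R\boxtimes \mathcal{O}_C(2x-3)\bigr)^{\oplus 2},
\]
a bundle whose cohomology and general-interpolation properties on $Y\times\PP^1$ are controlled by Proposition \ref{bg1} and Lemma \ref{abg1} (applied to $R':=R\boxtimes \mathcal{O}_C(2x-3)$, which is globally generated with $h^1=0$). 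The base cases are $x=1$, where part (a) immediately gives $h^0(\mathcal{I}_S\otimes E)=0$ and $h^1(\mathcal{I}_S\otimes E)=2s$, and $x=2$, where $h^0(E)=3\alpha$ is handled by a direct specialization.

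The inductive step is a semicontinuity/Horace argument: specialize $S$ to $A\sqcup B$ with $A$ generic in $X$ and $B$ generic in $D$, choosing $a=\#A$ and $b=\#B$ so that the numerics balance. The residual sequence
\[
0\to \mathcal{I}_A\otimes E(-D)\to \mathcal{I}_{A\sqcup B}\otimes E\to \mathcal{I}_{B,D}\otimes E_{|D}\to 0
\]
reduces the required $h^0$/$h^1$ of $\mathcal{I}_S\otimes E$ to (i) the inductive hypothesis applied to $R\boxtimes\Omega^1_{\PP^2}(x-2)$ for $A$ of appropriate size and (ii) interpolation on $D$ for $\mathcal{I}_B\otimes (R\boxtimes\mathcal{O}_C(2x-3))^{\oplus 2}$. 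Together, these precisely yield $\max\{0,\alpha(x^2-1)-2s\}$ and $\max\{0,2s-\alpha(x^2-1)\}$, whence semicontinuity promotes the statement to the generic $S$.

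The main obstacle is the parity/balancing step: when $2s$ is near $\alpha(x^2-1)$, a naive split $s=a+b$ leaves a residue of one point on one of the two summands of $E_{|D}$, so neither side of the residual sequence has the cleanest possible numerics. This is exactly the situation anticipated by Remark \ref{kk2} with $r=2$, $k=1$: one replaces the excess honest point on $D$ by a virtual ``s-point" of $\VV=\PP(E_0)$, which imposes only $r-k=1$ condition transverse to the chosen line in the rank-$2$ fibre. Setting up this elementary transformation, verifying that its cohomology on $D$ really is controlled by the degree-$(2x-3)$ line bundles on $C\cong\PP^1$ (via Proposition \ref{bg1}), and checking that the extra condition drops $h^0$ or raises $h^1$ by exactly one, is where the technical work lies; once these bookkeeping checks go through, the induction closes.
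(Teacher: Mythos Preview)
Your overall architecture matches the paper's: part (a) by K\"unneth and Remark~\ref{ke3}, part (b) by induction with step $2$ using a smooth conic $C\subset\PP^2$, and the elementary-transformation/s-point device of Remark~\ref{kk2} to resolve the one delicate case. But you have located the difficulty in the wrong place.

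In the inductive step there is \emph{no} parity obstruction. Since $x$ and $x-2$ have the same parity, $\alpha(x^2-1)$ and $\alpha((x-2)^2-1)$ have the same parity, so $s'(x)-s'(x-2)=s''(x)-s''(x-2)=2\alpha(x-1)$ exactly. With $D=\pi_2^{-1}(C)$ one has $E_{|D}\cong L^{\oplus 2}$ for $L=R\boxtimes\mathcal O_C(2x-3)$ and $h^0(L)=2\alpha(x-1)$; thus taking $b=\#B=h^0(L)$ gives $h^i(D,\mathcal I_{B,D}\otimes E_{|D})=0$ for $i=0,1$, and $a=\#A=s'(x-2)$ (resp.\ $s''(x-2)$) feeds straight into the inductive hypothesis. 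No s-points are needed here.

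The genuine difficulty is the base case $x=2$, which you dismiss as ``a direct specialization''. The conic is unavailable here because $E(-D)=R\boxtimes\Omega^1_{\PP^2}(0)$ has $h^1=\alpha\ne 0$. The paper instead takes a \emph{line} $L\subset\PP^2$ and uses $\Omega^1_{\PP^2}(2)_{|L}\cong\mathcal O_L(1)\oplus\mathcal O_L$: the surjection $\Omega^1_{\PP^2}(2)\to\mathcal O_L$ onto the trivial summand has kernel $\mathcal O_{\PP^2}^{\oplus 2}$ (this is \cite[Lemma~5.1.1]{ida}), and after pulling back and twisting by $R$ one gets $G=\mathrm{ker}(\psi)\cong(R\boxtimes\mathcal O_{\PP^2})^{\oplus 2}$. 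Now $B$ is $\alpha=h^0(R)$ general points of $M=\pi_2^{-1}(L)$, $A$ is $s'(2)-\alpha$ (resp.\ $s''(2)-\alpha$) general points of $X$, and the s-points of $\VV=\PP(G)$ over $B$ supply exactly the missing $\alpha$ conditions; the vanishing $h^0(R\boxtimes\Omega^1_{\PP^2}(1))=0$ is what forces the s-points to be independent. So the elementary transformation you correctly identified as the key tool is deployed at $x=2$ along a line, not in the inductive step along the conic.
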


\begin{proof}
Part (a) follows from the first part of Remark \ref{ke3}
and the K\"{u}nneth formula. Since $R\boxtimes \Omega ^1_{\PP^2}(x)$ has rank $2$, for any finite union $S$ of $s$ points we have $h^0(\Ii_S\otimes R\boxtimes \Omega ^1_{\PP^2}(x)) 
\ge \max \{0, \alpha(x^2-1) -2s\}$, $h^1(\Ii_S\otimes R\boxtimes \Omega ^1_{\PP^2}(x)) \ge\max \{0,
2s-\alpha(x^2-1)\}$ and one of these inequalities is an equality if and only if the other one is an equality.

Set $s'(x): = \lfloor
(x^2-1)\alpha/2\rfloor$ and
$s''(x):= \lceil (x^2-1)\alpha/2\rceil$. 

\quad {\bf Claim 1:} The lemma is true for a fixed $x$ and all $s$ if and only if it is true for the pairs $(x,s'(x))$
and $(x,s''(x))$.

\quad {\bf Proof Claim 1:} The `` only if '' part is trivial. Assume that the lemma is true for the pairs $(x,s'(x))$
and $(x,s''(x))$. Thus $h^1(\Ii _A\otimes R\boxtimes \Omega ^1_{\PP^2}(x))=0$ for a general $A\subset X$ such that
$\#A = s'(x)$ and $h^0(\Ii _B\otimes R\boxtimes \Omega ^1_{\PP^2}(x)) =0$ for a general $B\subset X$ such that
$\#B = s''(x)$. Fix
$s\in
\NN$. First assume
$s\le s'(x)$. Take $E\subseteq A$ such that $\#E=s$. Since $E\subseteq A$ and $h^1(\Ii _A\otimes R\boxtimes \Omega ^1_{\PP^2}(x))=0$, $h^1(\Ii _E\otimes R\boxtimes \Omega ^1_{\PP^2}(x))=0$. Thus the lemma is true for the triple
$(x,s)$ by the semicontinuity theorem for cohomology. Now assume $s\ge s''(x)$. Take any finite set $F\supseteq B$ such
that $\#B=s$. Since $h^0(\Ii _B\otimes R\boxtimes \Omega ^1_{\PP^2}(x)) =0$,  it is $h^0(\Ii _F\otimes R\boxtimes \Omega ^1_{\PP^2}(x)) =0$. Apply the semicontinuity theorem for cohomology.

We divide the remaining parts of the proof into $3$ steps. 

\quad {\em Step (i)} Assume $x$ odd. Thus $x^2-1$ is even. Hence $s'(x)=s''(x) = (x^2-1)\alpha/2$. The case $x=1$ is true for all $s$, because $h^i(\Ii_S\otimes R\boxtimes \Omega ^1_{\PP^2}(1))=0$, $i=0,1$. Assume $x\ge 3$ and odd and that the proposition is true for the integer $x-2$. Let $C\subset \PP^2$ be a smooth conic and set $D:= \pi _2^{-1}(C)$. Fix a general $A\subset  X\setminus D$ with $\#S = s'(x-2)$ and a general $E\subset D$ such that $\#E = s'(x)-s'(x-2)$. Set $B=E\cup D$. To prove the proposition for the integer $s$ it is sufficient to prove that $h^0(\Ii_B\otimes R\boxtimes \Omega ^1_{\PP^2}(x)) =0$.  Consider the residual exact sequence
\begin{equation}\label{eqbg1a}
0 \to \Ii _S\otimes R\boxtimes \Omega ^1_{\PP^2}(x-2) \to \Ii _B\otimes R\boxtimes \Omega ^1_{\PP^2}(x) \to \Ii _{E,D}\otimes R\boxtimes \pi _2^{-1}(\Oo_{\PP^2}(x)|C) \to 0
\end{equation}
The inductive assumption gives $h^0(\Ii _S\otimes R\boxtimes \Omega ^1_{\PP^2}(x-2))=0$. 

\quad {\bf Claim 2:} $h^i(D,\Ii_{E,D}\otimes R\boxtimes \pi_2^\ast (\Omega ^1_{\PP^2}(x)_{|C}))=0$, $i=0,1$.

\quad {\bf Proof Claim 2:} Remark \ref{ke3} and the K\"{u}nneth formula give $2\#E = h^0(R\boxtimes \pi_2^\ast (\Omega ^1_{\PP^2}(x)_{|C})$. Since $h^0(\Oo_D(x,y)\otimes \pi _1^\ast (\Omega ^1_{\PP^2}))=2(\#E)$ and $\Omega ^1_{\PP^2}$ has rank $2$,
$h^0(D,\Ii_{E,D}\otimes R\boxtimes \pi_2^\ast (\Omega ^1_{\PP^2}(x)_{|C}))= h^1(D,\Ii_{E,D}\otimes R\boxtimes \pi_2^\ast (\Omega ^1_{\PP^2}(x)_{|C}))$. By Remark \ref{ke3} $\pi _2^\ast (\Omega ^1_{\PP^2})_{|D}$ is isomorphic
to the direct sum of two isomorphic line bundles on $X$. Thus  Claim 2 follows from the observation that for any integral projective variety $Y$ and any line bundle $M$ on $Y$
$h^0(Y,\Ii_G\otimes M)=0$ for a general $G\subset Y$ such that $\#G = h^0(M)$.
By Remark \ref{ke3}, the K\"{u}nneth formula and the generality of $E$, $h^i(D,\Ii _{E,D}\otimes R\boxtimes \pi _2^{-1}(\Oo_{\PP^2}(x)|C))=0$, $i=0,1$. 

To prove the proposition for $x$ use the cohomology exact sequence of \eqref{eqbg1a} and Claims 1 and 2.

\quad { \em Step (ii)} Assume $x$ even and $x\ge 4$. We will not prove that the lemma is true in this case, we will only prove that it is true
if it is true the case $x'=2$, which will be proved in Step (iii). The inductive proof $x-2\Rightarrow x$ is done as in Step (i) using a  smooth conic $C\subset \PP^2$ and \eqref{eqbg1a} first for $s'(x-2)$ and $s'(x)$ and then for $s''(x-2)$ and $s''(x)$.

\quad {\em Step (iii)} Assume $x=2$. By Steps (i) and (ii) it is sufficient to prove this case to conclude the proof of the proposition. We recall that $s'(2)=\lfloor 3\alpha/2\rfloor$ and $s''(2) =\lceil 3\alpha/2\rceil$ and
the definition of elementary transformation $E_0$ of a vector bundle $E_1$ with respect to a surjection $\phi :E_{1|M} \to E_2$ with  $M\subset X$ an effective divisor and $E_2$ a vector bundle on $M$. In this step $E_1$ has rank $2$ and $E_2$ is a line bundle on $M$
(Remark \ref{kk2}). Recall that $h^0(R\boxtimes \Omega ^1_{\PP^2}(2))=3\alpha$. Fix a line $L\subset \PP^2$ and set
$M:= \pi_1^{-1}(L)$. Since $\Omega ^1_{\PP^2}(2)_{|L} \cong \Oo_L(1)\oplus \Oo_L$, there is a surjection $\rho':
\Omega ^1_{\PP^2}(2)\to \Oo_L$ with $\mathrm{ker}(\rho') \cong \Oo_{\PP^2}^{\oplus 2}$ (\cite[Lemma 5.1.1]{ida}). Applying $\pi_2^\ast$ and twisting by  $\pi^*_1(R)$ we get a
surjection
$\psi: R\boxtimes \Omega ^1_{\PP^2}(2)
\to R\boxtimes \Oo_M$. 

\quad  {\bf Claim 3:} $\mathrm{ker}(\psi) \cong (R\boxtimes \Oo_{\PP^2})^{\oplus 2}$.

\quad {\bf Proof Claim 3:} Set $G:= \mathrm{ker}(\psi)$. Recall that \cite[Lemma 5.1.1]{ida} gives $\mathrm{ker}(\rho ') \cong \Oo_{\PP^2})^{\oplus 2}$, since $\psi$ is obtained twisting by $R$ the pull-back $\pi _1^\ast$ of $\rho'$. Since elementary transformations commute with pull-backs (\cite[Proposition 2.3]{mar}), we get Claim 3.

\quad {\em Step (iii-1)} We first prove the case $x=2$ of the lemma for the integer $s:= s'(2)$. We take a set $S\subset X$ with $\#S = s'(2)$ and with $S=A\cup B$, where $A$ is a general subset of $X$ with $\#A =s'(2)-h^0(R)$ and $B$ is a general subset of $M$ with $\#B = h^0(R)$. By semicontinuity to prove the lemma in this case it is sufficient to prove that $h^1(\Ii _S\otimes R\boxtimes \Omega ^1_{\PP^2}(2)) =0$. Since $B\subset M$ is general, $h^i(M,\Ii _{B,M}\otimes R\boxtimes \Oo_L) =0$, $i=0,1$. Claim 3 gives $h^1(\Ii _A\otimes G)=0$ and hence $h^0(\Ii_A\otimes G) =2\alpha-2(\#A)$. Take $\VV = \PP E_0$ As in \cite{hs,ida} to conclude the proof it is sufficient to prove that $yh^0(R)$ s-points of $\VV$ corresponding to $B$ gives $h^0(R)$ independent conditions to the vector space $H^0(\Ii_A\otimes G)$.
We order the points $p_1,\dots ,p_{h^0(R)}$ of $B$ and get an ordering $v_1,\dots ,v_{h^0(R)}$ of the $h^0(R)$ s-points of $\VV$ associated to $B$. Suppose the existence of $e\in \{1,\dots ,h^0(R)\}$ such that $v_e$ does not given an independent condition to the subspace $W$ of $H^0(\Oo_{\VV}(1))$ of all sections vanishing on $\pi^{-1}(A)\cup \{v_1,\dots ,v_{e-1}\}$ with the convention $\{v_1,\dots ,v_{e-1}\}=\emptyset$ if $e=1$. Note that $\dim W \ge h^0(R)+1-e>0$. Since $p_e$ is general in $M$ and $\pi _2^\ast (\Omega ^1_{\PP^2}(2))_{|M}$, we get that $v_e$ is a general $s$-point on $\VV_{|\pi ^{-1}(M)}$ residual of $\pi ^{-1}(p_e)$ by the elementary transformation $\psi$. Since $M =\pi_2^{-1}(L)$, $W$ injects in $H^0(R\boxtimes \pi _2^\ast (\Omega ^1_{\PP^2}(1)) =0$, a contradiction.

\quad {\em Step (iii-2)} To conclude the proof of the lemma it is sufficient to prove the case $(x,s) =(2,s''(2))$. It is sufficient to mimic the proof of Step (iii-1) taking as $A$ a general subset of $X$ with cardinality $s''(2)-h^0(R)$.
\end{proof}

\begin{lemma}\label{pbg1}
Take $Y$, $R$, $s$, $S$ and $t$ as in Proposition \ref{bg2} with $s<h^0(R)\binom{t+2}{2}$. Then the multiplication map 
$$\mu : H^0(\pi_2^\ast(\Oo _{\PP^2}(1)) \otimes H^0(\Ii _S\otimes R\boxtimes (t+1)) \to H^0(\Ii _S\otimes R\boxtimes (t+2))$$
is surjective.
\end{lemma}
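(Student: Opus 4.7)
The plan is to imitate the proof of Proposition \ref{bg1}(2), replacing the short exact sequence \eqref{eqb1} on $\PP^1$ by the Euler sequence on $\PP^2$. Set $\alpha:=h^0(R)$.

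First, I would twist the Euler sequence
$$0\to \Omega^1_{\PP^2}(1)\to \Oo_{\PP^2}^{\oplus 3}\to \Oo_{\PP^2}(1)\to 0$$
by $\Oo_{\PP^2}(t+1)$, pull back by $\pi_2$, and tensor by $\pi_1^\ast R\otimes \Ii_S$. Because the twisted Euler sequence is a short exact sequence of vector bundles on $\PP^2$, it is locally split, so pulling back and tensoring with an arbitrary coherent sheaf preserves exactness. This gives on $X$ the short exact sequence
\begin{equation}\label{eulerpbg}
0\to \Ii_S\otimes R\boxtimes \Omega^1_{\PP^2}(t+2)\to \bigl(\Ii_S\otimes R\boxtimes(t+1)\bigr)^{\oplus 3}\to \Ii_S\otimes R\boxtimes(t+2)\to 0.
\end{equation}

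Next I would pass to the long exact cohomology sequence of \eqref{eulerpbg}. Choosing three linear forms $x_0,x_1,x_2$ as a basis of $H^0(\Oo_{\PP^2}(1))$ identifies $H^0$ of the middle term with $H^0(\pi_2^\ast \Oo_{\PP^2}(1))\otimes H^0(\Ii_S\otimes R\boxtimes(t+1))$. Under this identification the induced map to $H^0(\Ii_S\otimes R\boxtimes(t+2))$ is exactly $\mu$, since the middle arrow in the twisted Euler sequence is multiplication by $(x_0,x_1,x_2)$. Therefore $\mu$ is surjective as soon as $h^1\bigl(\Ii_S\otimes R\boxtimes\Omega^1_{\PP^2}(t+2)\bigr)=0$.

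Finally, applying Proposition \ref{bg2}(b) with $x=t+2$ gives
$$h^1\bigl(\Ii_S\otimes R\boxtimes\Omega^1_{\PP^2}(t+2)\bigr)=\max\bigl\{0,\,2s-\alpha\bigl((t+2)^2-1\bigr)\bigr\}=\max\bigl\{0,\,2s-\alpha(t+1)(t+3)\bigr\}.$$
The hypothesis $s<\alpha\binom{t+2}{2}=\alpha(t+1)(t+2)/2$ yields $2s<\alpha(t+1)(t+2)<\alpha(t+1)(t+3)$, so this $h^1$ vanishes and $\mu$ is surjective.

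The only real point to watch is the identification in the second step of the connecting map on $H^0$ with the multiplication map $\mu$; this follows routinely from the naturality of the Euler sequence under pullback along $\pi_2$ and tensor product with the locally free sheaf $\pi_1^\ast R$. Aside from that, the proof is a direct application of Proposition \ref{bg2}(b), and the strict inequality $(t+1)(t+2)<(t+1)(t+3)$ is what creates the gap between the hypothesis $s<\alpha\binom{t+2}{2}$ and the required vanishing condition $2s\le \alpha((t+2)^2-1)$.
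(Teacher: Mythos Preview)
Your proof is correct and follows the same strategy as the paper: reduce surjectivity of $\mu$ via the (pulled-back, twisted) Euler sequence to the vanishing of $h^1\bigl(\Ii_S\otimes R\boxtimes\Omega^1_{\PP^2}(t+2)\bigr)$. The only difference is that the paper obtains this vanishing by first treating the case $Y=\{\mathrm{pt}\}$ via \cite{gm,tv} and then invoking ``as in the proof of Proposition \ref{bg2}'' for general $Y$, whereas you simply quote Proposition \ref{bg2}(b) directly---which is cleaner and entirely legitimate since that proposition has already been established.
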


\begin{proof}
If $Y$ is a point, the Euler's sequence of $\Omega ^1_{\PP^2}$ (Remark \ref{ke3}) shows that it is sufficient to prove that $h^1(\Ii _S\otimes \Omega ^1_{\PP^2}(t+2)) =0$. This is true by the generality of $S$ and \cite{gm} (i.e. the case $n=2$ of \cite{tv}).
The case $\dim Y>0$ is done (using the case just done) as in the proof of Proposition \ref{bg2}.
\end{proof}

\begin{theorem}\label{p2p1}
Take $X:= \PP^{n_1}\times \cdots \times \PP^{n_k}$ with $n_i\in \{1,2\}$ for all $i$. Fix an integer $z>0$ and take a general $S\subset X$ such that $\#S =z$. For each $(a_1,\dots ,a_k)\in \II_0(S)$ and each $i\in \{1,\dots ,n_k\}$
let $W(a_1,\dots ,a_k;i;n_i)$ be a general linear subspace of $H^0(\Ii_S((a_1,\dots ,a_k)+\epsilon _i))$ such that $\dim W(a_1,\dots ,a_k;i;n_i)=\max \{0,-z+ \Delta\binom{n_i+a_i+1}{n_i}/\binom{n_i+a_i}{n_i}-(n_i+1)(\Delta -z)$, where $\Delta:= \prod_{h=1}^{k} \binom{n_h+a_ih}{a_h}$.

\quad (i) $S$ has maximal rank. 

\quad (ii) The multigraded ideal $\II(S)$ of $S$ is generated by the direct sum $\TT$ of all $H^0(\Ii_S(a_1,\dots ,a_k))$ and all $W(a_1,\dots,a_k;i;n_i)$, $(a_1,\dots ,a_k)\in \II_0(S)$, $i\in \{1,\dots ,k\}$, except that if $(a_1,\dots ,a_k)+\epsilon _i
= (b_1,\dots ,b_k)+\epsilon _j$ for some others $(b_1,\dots ,b_k)\in \II_0(S)$ we only take one subspace $W$ among all possible $(b_1,\dots ,b_k)$ and $\epsilon _j$.

\quad (iii) With the restriction on $\TT$ given in  part (ii) no proper subspace of $\TT$ generates $\II(S)$.\end{theorem}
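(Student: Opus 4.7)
The plan is to mirror the proof of Theorem~\ref{bb1} with Proposition~\ref{bg2} and Lemma~\ref{pbg1} substituting for Proposition~\ref{bg1} in the $\PP^2$-directions. For each coordinate $i$ I identify $X$ with $Y\times\PP^{n_i}$ by setting $Y=\prod_{j\ne i}\PP^{n_j}$ and $R=\Oo_Y(a_1,\dots,\widehat{a_i},\dots,a_k)$, so that $\Oo_X(a_1,\dots,a_k)=R\boxtimes\Oo_{\PP^{n_i}}(a_i)$ and $\pi_i^\ast(\Omega^1_{\PP^{n_i}}(1))\otimes\Oo_X(a_1,\dots,a_k)=R\boxtimes\Omega^1_{\PP^{n_i}}(a_i+1)$; moreover $h^0(R)=\Delta_i$ and $R$ is globally generated with $h^1(R)=0$, so all hypotheses of Proposition~\ref{bg2} are met. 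I would deduce part~(i) from Remark~\ref{ad1} and Lemma~\ref{abg1}: together they reduce maximal rank of $S$ to finitely many ``border'' multi-indices, for each of which ``expected-rank'' is a Zariski-open condition on $X^z$; nonemptiness is obtained by induction on $k$ via Proposition~\ref{bg1} for $\PP^1$-directions and Proposition~\ref{bg2} for $\PP^2$-directions, with the conic/line specialization appearing in the latter transplanted intact to the multi-factor setting.

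For part~(iii) the key computation is the cokernel of the multiplication map $\mu:H^0(\Oo_X(\epsilon_i))\otimes H^0(\Ii_S(a_1,\dots,a_k))\to H^0(\Ii_S((a_1,\dots,a_k)+\epsilon_i))$. Pulling the Euler sequence of $\PP^{n_i}$ back via $\pi_i$ and tensoring with $\Ii_S(a_1,\dots,a_k)$ produces
\[
0\to R\boxtimes\Omega^1_{\PP^{n_i}}(a_i+1)\otimes\Ii_S\to H^0(\Oo_X(\epsilon_i))\otimes\Ii_S(a_1,\dots,a_k)\to\Ii_S((a_1,\dots,a_k)+\epsilon_i)\to 0.
\]
Since $(a_1,\dots,a_k)\in\II_0(S)\setminus\II_1(S)$ by part~(i), the $H^1$ of the middle sheaf vanishes and the long exact sequence identifies $\dim\mathrm{coker}(\mu)$ with $h^1(R\boxtimes\Omega^1_{\PP^{n_i}}(a_i+1)\otimes\Ii_S)$. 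For $n_i=1$ this reduces to $h^1(\Ii_S((a_1,\dots,a_k)-\epsilon_i))$ and coincides verbatim with Step~(a) of Theorem~\ref{bb1}; for $n_i=2$ Proposition~\ref{bg2}(b) computes it as $\max\{0,2z-\Delta_i a_i(a_i+2)\}=\max\{0,2z-2a_i\Delta/(a_i+1)\}$, matching the stated $\dim W(a_1,\dots,a_k;i;2)$. Generality of $W$ supplies a direct complement to $\mathrm{Im}(\mu)$ in $H^0(\Ii_S((a_1,\dots,a_k)+\epsilon_i))$, and Remark~\ref{ad1} forces each $H^0(\Ii_S(a_1,\dots,a_k))$ into every minimal generating system, yielding part~(iii).

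Part~(ii) then follows Step~(b) of Theorem~\ref{bb1}: letting $\UU\subseteq\II(S)$ be the span of $\TT$, I induct on $s:=\#\{i:b_i=a_i+1\}$ to prove $H^0(\Ii_S(b_1,\dots,b_k))\subseteq\UU$ for every descendant of each $(a_1,\dots,a_k)\in\II_0(S)$. The inductive step asks for surjectivity of a single-direction multiplication map and is supplied by Proposition~\ref{bg1}(2) for $\PP^1$-directions and by Lemma~\ref{pbg1} for $\PP^2$-directions; in both cases the numerical hypothesis $(c_1,\dots,c_k)-\epsilon_j\notin\II_1(S)$ is part~(i). I expect the main obstacle to be precisely part~(i): establishing maximal rank for a general $S$ of arbitrary cardinality across mixed $\PP^1/\PP^2$ factors requires a careful multi-factor adaptation of the conic specialization of Proposition~\ref{bg2}, and verifying that this specialization can be performed coherently while holding the remaining factors generic is the subtle point of the whole argument.
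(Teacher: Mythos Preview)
Your treatment of parts (ii) and (iii) is essentially the paper's own argument: the paper also computes $\dim\mathrm{coker}(\nu)$ by invoking Proposition~\ref{bg1} for $n_i=1$ and Proposition~\ref{bg2} for $n_i=2$, then uses Proposition~\ref{bg1}(2) together with Lemma~\ref{pbg1} to reduce to immediate descendants, and finally refers back to Step~(b) of Theorem~\ref{bb1} for the induction on $s=\#\{i:b_i=a_i+1\}$. Your explicit identification of $\mathrm{coker}(\mu)$ with $h^1(\Ii_S\otimes R\boxtimes\Omega^1_{\PP^{n_i}}(a_i+1))$ via the pulled-back Euler sequence is a clean way to phrase what the paper cites those propositions for.

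Where you diverge from the paper is part~(i), and here you substantially overestimate the difficulty. Maximal rank of a \emph{general reduced} set of $z$ points is elementary and does not require Propositions~\ref{bg1} or~\ref{bg2} or any conic specialization: for a fixed multidegree $(a_1,\dots,a_k)$, the condition ``the evaluation map $H^0(\Oo_X(a_1,\dots,a_k))\to K^z$ has maximal rank'' is Zariski-open in $X^z$ and nonempty because one may add points one by one outside the current base locus. By Proposition~\ref{0bg1}(1) (or Lemma~\ref{abg1}) only the finitely many multidegrees in the box $\{0,\dots,z-1\}^k$ need to be checked, since $h^1(\Ii_S(b_1,\dots,b_k))=0$ automatically once all $b_i\ge z-1$; the intersection of finitely many nonempty open sets is nonempty. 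The paper disposes of (i) in a single sentence for exactly this reason. Your proposed inductive specialization machinery for (i) is not wrong, but it is aimed at a nonexistent obstacle; the genuine work in this theorem is the cokernel computation, which you handle correctly. One small point: in your final paragraph, the hypothesis you need for the inductive step is not merely ``$(c_1,\dots,c_k)-\epsilon_j\notin\II_1(S)$ follows from (i)'', but rather the numerical inequality $z<h^0(\Oo_X((c_1,\dots,c_k)-\epsilon_j))$, which together with (i) gives the vanishing; that inequality is what the ordering argument in Step~(b) of Theorem~\ref{bb1} supplies, and you should carry it over explicitly.
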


\begin{proof}
Since $h^1(\Ii _S(b_1,\dots ,b_k)) =0$ for each zero-dimensional scheme $S\subset X$ with $\deg (S)=z$ if $b_i\ge z-1$ for all $i$ and $S$ is general, then $S$ has maximal rank. Thus $\II_0(S)\cap \II_1(S) =\emptyset$.
Fix $(a_1,\dots ,a_k)\in \II_0(S)$ and $i\in \{1,\dots ,k\}$. Since $S$ has maximal rank and $(a_1,\dots ,a_k)\in A_0(S)$,
$h^1(\Ii _S(a_1,\dots ,a_k)) =0$, $h^1(\Ii _S((a_1,\dots ,a_k)+\epsilon _i) =0$, $h^0(\Ii _S(a_1,\dots ,a_k)) =\Delta -z$ and $h^0(\Ii _S((a_1,\dots ,a_k)+\epsilon _i) =-z+ \Delta\binom{n_i+a_i+1}{n_i}/\binom{n_i+a_i}{n_i} $. Call $\nu : H^0(\pi _i^\ast (\Oo _{\PP^{n_i}}(1))\otimes H^0(\Ii_S(a_1,\dots ,a_k))\to  H^0(\Ii_S((a_1,\dots ,a_k)+\epsilon _i))$ the multiplication map.
By Proposition \ref{bg1} (case $n_i=1$) and Proposition \ref{bg2} (case $n_i=2$) $\nu$ has maximal rank, i.e. $\dim \mathrm{coker}(\nu) = \max \{0,-z+ \Delta\binom{n_i+a_i+1}{n_i}/\binom{n_i+a_i}{n_i}-(n_i+1)(\Delta -z)$. Since $\dim W(a_1,\dots ,a_k;i;n_i)=\dim \mathrm{coker}(\nu)$ and $W(a_1,\dots ,a_k;i;n_i)$ is general, we have $\mathrm{Im}(\nu)+W(a_1,\dots ,a_k;i;n_i) = H^0(\Ii_S((a_1,\dots ,a_k)+\epsilon _i))$ and $\mathrm{Im}(\nu)\cap W(a_1,\dots ,a_k;i;n_i) =0$. By part (2) of Proposition \ref{bg1} and Lemma \ref{pbg1} to conclude the proof it is sufficient to test all components of $\II(S)$
with multidegree $(b_1,\dots ,b_k)$ such that there is $(a_1,\dots ,a_k)\in \II_0(S)$ with $a_i\le b_i\le a_i+1$ for all $i$. The case $b_h=a_h$ for $k-1$ indices $h$ was done using $\nu$. The step from this case to the case in which
$b_i=a_i+1$ for $i\in \{i_1,\dots ,i_s\}$ with $s\ge 2$ is done as in the last part of the proof of Theorem \ref{bb1}.
\end{proof}

\begin{lemma}\label{ee1}
Fix an integer $n\ge 2$. Then there exists an integer $t(n)$ such that for all integers $t\ge t(n)$ and all $s\in \NN$ either
$h^1(\PP^n,\Ii _S\otimes \Omega ^1_{\PP^n}(t)) =0$ or $h^0(\PP^n,\Ii _S\otimes \Omega ^1_{\PP^n}(t)) =0$, where $S$ is a
general subset of $\PP^n$ with $\#S=s$.
\end{lemma}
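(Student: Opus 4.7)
The plan is to argue by induction on $n$, with base case $n=2$ provided by the Geramita--Maroscia/Alexander--Hirschowitz result of \cite{gm,tv} cited in Lemma~\ref{pbg1}: for $t\ge 2$ a general finite subset of $\PP^2$ has maximal rank with respect to $\Omega^1_{\PP^2}(t)$. For the inductive step, suppose $t(n-1)$ exists and fix $t$ much larger than $t(n-1)$.

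First I would reduce to two critical sizes of $S$, mimicking Claim~1 in the proof of Proposition~\ref{bg2}. Set
\[
s'(t):=\left\lfloor h^0(\Omega^1_{\PP^n}(t))/n\right\rfloor,\qquad s''(t):=\left\lceil h^0(\Omega^1_{\PP^n}(t))/n\right\rceil .
\]
If $s\le s'(t)$, any $E\subseteq A$ with $\#E=s$ and $A$ a general $s'(t)$-subset satisfying $h^1(\Ii_A\otimes\Omega^1_{\PP^n}(t))=0$ automatically has $h^1(\Ii_E\otimes\Omega^1_{\PP^n}(t))=0$ (Remark~\ref{0bg01}); symmetrically for $s\ge s''(t)$, any $F\supseteq B$ with $\#F=s$ and $B$ a general $s''(t)$-subset satisfying $h^0=0$ inherits that vanishing. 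Semicontinuity then moves the property to a general $S$ of size $s$, so it suffices to treat $s\in\{s'(t),s''(t)\}$.

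For each critical size I would run a Horace-type specialization. Pick a general hyperplane $H\cong\PP^{n-1}\subset\PP^n$; the conormal sequence gives
\begin{equation*}
0\to \Oo_H(t-1)\to \Omega^1_{\PP^n}(t)_{|H}\to \Omega^1_H(t)\to 0.
\end{equation*}
Specialize $S$ to a disjoint union $A\sqcup B$ with $A\subset \PP^n\setminus H$ general and $B\subset H$ general, where $\#B$ is chosen so that $B$ is a critical subset for $\Omega^1_{\PP^n}(t)_{|H}$ on $H$ and $\#A=s-\#B$. The residual exact sequence
\begin{equation*}
0\to \Ii_A\otimes \Omega^1_{\PP^n}(t-1)\to \Ii_{A\cup B}\otimes \Omega^1_{\PP^n}(t)\to \Ii_{B,H}\otimes \Omega^1_{\PP^n}(t)_{|H}\to 0
\end{equation*}
together with the induction hypothesis on $n$ applied to $\Omega^1_H(t)$, the standard vanishing for $\Oo_H(t-1)$, and a further induction on $t$ applied to $\Ii_A\otimes \Omega^1_{\PP^n}(t-1)$, would yield the desired maximal rank for $\Ii_S\otimes \Omega^1_{\PP^n}(t)$ via the cohomology long exact sequence.

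The main obstacle is that the restriction $\Omega^1_{\PP^n}(t)_{|H}$ is a non-split extension, not simply $\Omega^1_H(t)\oplus \Oo_H(t-1)$; one cannot in general match the values $s'(t)$ and $\#B$ exactly to the critical sizes on $H$ without losing one unit. This is precisely the situation addressed by the elementary transformation/s-point technique of Remark~\ref{kk2} and \cite[Lemma~5.1.1]{ida}, used for rank $2$ in Step (iii) of Proposition~\ref{bg2}. The delicate part of the argument will be to adapt that technique to the rank $n$ bundle $\Omega^1_{\PP^n}(t)$: one forms the projective bundle $\VV:=\PP(E_0)$ where $E_0$ is the kernel of a surjection $\Omega^1_{\PP^n}(t)\to \Oo_H(t-1)$ obtained by globally extending the quotient in the conormal sequence, and one must verify that a suitable number of ``s-points'' of $\VV$ lying over $B$ impose independent conditions on $H^0(\VV,\Oo_{\VV}(1))$ in the residual linear system cut by $\pi^{-1}(A)$. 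Once this is established, counting dimensions in $\VV$ recovers the expected maximal rank statement on $\PP^n$ for both $s=s'(t)$ and $s=s''(t)$, closing the induction and defining $t(n)$ as any sufficiently large threshold making all the auxiliary inductive hypotheses applicable.
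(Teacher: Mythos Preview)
Your plan is a genuinely different route from the paper's, and a much harder one. You are in effect sketching an inductive re-proof of (a special case of) the Hirschowitz--Simpson theorem \cite{hs} via Horace and elementary transformations. The paper instead \emph{cites} that theorem as a black box: by \cite{hs} there is an integer $a(n)$ such that a general set of $s\ge a(n)$ points already has maximal rank with respect to $\Omega^1_{\PP^n}(t)$ for every $t\ge 0$. So only the finitely many values $s<a(n)$ remain. For those the paper picks $t(n)\ge n+2$ large enough that $\binom{t(n)+n-2}{n}\ge a(n)-1$; then for $t\ge t(n)$ and $s<a(n)$ a general $S$ satisfies $h^1(\Ii_S(x))=0$ for all $x\ge t-2$ (and $h^i(\Ii_S(y))=h^i(\Oo_{\PP^n}(y))$ for $i\ge 2$), so $\Ii_S$ is $(t-1)$-regular. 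The Euler sequence tensored by $\Ii_S$ identifies $H^1(\Ii_S\otimes\Omega^1_{\PP^n}(t))$ with the cokernel of the multiplication map $H^0(\Oo_{\PP^n}(1))\otimes H^0(\Ii_S(t-1))\to H^0(\Ii_S(t))$, which is surjective by regularity; hence $h^1(\Ii_S\otimes\Omega^1_{\PP^n}(t))=0$ outright. That is the whole proof.

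Your strategy is not wrong in spirit, but it has real gaps beyond the ones you flag. The s-point/elementary-transformation device you appeal to is carried out in \cite{ida} and in Step~(iii) of Proposition~\ref{bg2} only for a rank-$2$ bundle with a rank-$1$ quotient; pushing it to the rank-$n$ bundle $\Omega^1_{\PP^n}(t)$ and the conormal quotient $\Oo_H(t-1)$ is precisely the technical heart of \cite{hs}, not a routine extension. In addition, your double induction on $n$ and $t$ requires the residual cardinality $\#A$ to land on a critical size for $t-1$, which it will not in general without further differential-Horace adjustments. All of this can be made to work, but it amounts to re-deriving the theorem you should be quoting. The takeaway: split on $s$ versus the Hirschowitz--Simpson threshold $a(n)$, invoke \cite{hs} for $s\ge a(n)$, and for $s<a(n)$ use Castelnuovo--Mumford regularity plus the Euler sequence.
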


\begin{proof}
Recall that a particular case of \cite{hs} gives the existence of an integer $a(n)$ such that for all integers $t\ge 0$ and
$x\ge a(n)$ either $h^1(\PP^n,\Ii _A\otimes \Omega ^1_{\PP^n}(t)) =0$ or $h^0(\PP^n,\Ii _A\otimes \Omega ^1_{\PP^n}(t)) =0$,
where
$A$ is a general subset of $\PP^n$ with $\#A=x$. Let $t(n)$ be the minimal integer $\ge n+2$ such that $\binom{t(n)+n-2}{n} \ge
a(n)-1$. Fix integers $t\ge t(n)$ and $s\ge 0$ and let $S\subset \PP^n$ be a general subset with $\#S =s$. If $s\ge a(n)$,
then  either
$h^1(\PP^n,\Ii _S\otimes \Omega ^1_{\PP^n}(t)) =0$ or $h^0(\PP^n,\Ii _S\otimes \Omega ^1_{\PP^n}(t)) =0$ by the quoted result
of \cite{hs}. Now assume $s<a(n)$. Thus $s\le \binom{n+t(n)-21}{n}$. Since $S$ is general, $h^1(\PP^n,\Ii _S(x)) =0$ for all
$x\ge t-2$. Since $\dim S=0$, $h^i(S,\Oo _S(y)) =0$ for all $y\in \ZZ$ and all $i>0$. Thus the exact sequence$$0\to \Ii
_S(x)\to
\Oo_\PP^n(x)\to
\Oo_S(x)\to 0$$gives $h^i(\PP^n,\Ii _S(y)) =h^i(\PP^n,\Oo_{\PP^n}(y))$ for all $i\ge 2$ and all $y\in \ZZ$. Since
$h^1(\PP^n,\Ii _S(x)) =0$ for all
$x\ge t-2$, The Euler's sequence gives $h^1(\PP^n,\Ii _S\otimes \Omega ^1_{\PP^n}(t)) =0$.
\end{proof}

For any zero-dimensional scheme $Z\subset \PP^{n_1}\times \cdots \times \PP^{n_k}$, any $J\subseteq\{1,\dots ,k\}$ and any $(a_1,\dots ,a_k)\in \NN^k$ call for instance $\mu_{Z,J,a_1,\dots ,a_k}: H^0(\otimes _{i\in J}\pi _i^\ast (\Oo _{\PP^n_i}(1)))\otimes H^0(\Ii _Z(a_1,\dots ,a_k)) \to H^0(\Ii_Z(b_1,\dots ,b_k))$ the multiplication map, where $b_i=a_i+1$ if $i\in J$ and $b_i=a_i$ if $i\notin J$.

\begin{proposition}\label{ee2}
Fix $k$, $n_1,\dots ,n_k$, $i\in \{1,\dots ,k\}$, $s\in \NN$ and $(a_1,\dots ,a_k)\in \NN^k$ such that $a_i\ge t(n_i)-1$. Let $S\subset X:=\PP^{n_1}\times \cdots \times \PP^{n_k}$ be a general subset with $\#S =s$. 
Set $\tau _1:= \lfloor h^0(\Omega _{\PP^{n_i}}^1(a_i+1))/n_i\rfloor\times \prod _{j\ne i} \binom{n_j+a_j}{n_j}$
and $\tau_2:= \lceil h^0(\Omega _{\PP^{n_i}}^1(a_i+1))/n_i\rceil\times \prod _{j\ne i} \binom{n_j+a_j}{n_j}$. Assume
either $s\le \tau_1$ or $s\ge \tau_2$. Then
$\mu _{\{i\},S,a_1,\dots ,a_k}$ has maximal rank.
\end{proposition}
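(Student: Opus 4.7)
My plan is to convert the maximal-rank statement for $\mu=\mu_{\{i\},S,a_1,\ldots,a_k}$ into a cohomological vanishing for a twisted cotangent sheaf on $X$, and then verify that vanishing by specializing $S$ onto general fibers of $\eta_i$ so that Lemma~\ref{ee1} applies fiberwise.

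Set $Y:=X_i=\prod_{j\ne i}\PP^{n_j}$, $R:=\eta_i^\ast\Oo_{X_i}(a_1,\dots,\widehat{a_i},\dots,a_k)$, $M:=\prod_{j\ne i}\binom{n_j+a_j}{n_j}=h^0(R)$, $e:=h^0(\PP^{n_i},\Omega^1_{\PP^{n_i}}(a_i+1))$, and $\Gg:=R\otimes\pi_i^\ast\Omega^1_{\PP^{n_i}}(a_i+1)$. Then $h^q(R)=0$ for $q\ge1$ and $R$ is globally generated. Pulling back the Euler sequence on $\PP^{n_i}$ via $\pi_i$ and tensoring with $\Oo_X(a_1,\dots,a_k)\otimes\Ii_S$ yields
\begin{equation*}
0\to\Ii_S\otimes\Gg\to H^0(\Oo_{\PP^{n_i}}(1))\otimes\Ii_S(a_1,\dots,a_k)\to\Ii_S\bigl((a_1,\dots,a_k)+\epsilon_i\bigr)\to0,
\end{equation*}
whose long cohomology sequence gives $\mathrm{ker}(\mu)=H^0(\Ii_S\otimes\Gg)$ and shows that $h^1(\Ii_S\otimes\Gg)=0$ forces $\mu$ to be surjective. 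By K\"unneth together with the Euler-sequence computation on $\PP^{n_i}$ (a direct generalization of Remark~\ref{ke3}), $\Gg$ has rank $n_i$, satisfies $h^0(\Gg)=Me$ and $h^q(\Gg)=0$ for $q\ge1$. Thus the proposition is reduced to $h^1(\Ii_S\otimes\Gg)=0$ when $s\le\tau_1$ and $h^0(\Ii_S\otimes\Gg)=0$ when $s\ge\tau_2$.

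For the vanishing I specialize $S$ to lie on general fibers of $\eta_i$. Choose $p_1,\dots,p_M\in Y$ general; since $M=h^0(R)$ and $M$ general points of the product $Y$ impose independent conditions on $R$ (a standard induction on the number of factors of $Y$ from the classical fact on $\PP^n$), the evaluation map $H^0(R)\to\bigoplus_jR_{p_j}$ is an isomorphism. The fibers $L_j:=\eta_i^{-1}(p_j)\cong\PP^{n_i}$ satisfy $\Gg|_{L_j}\cong R_{p_j}\otimes\Omega^1_{\PP^{n_i}}(a_i+1)$. For $s\le\tau_1$, pick $A_j\subset\PP^{n_i}$ general with $\#A_j\le\lfloor e/n_i\rfloor$ and $\sum_j\#A_j=s$, and set $S_0:=\bigcup_{j=1}^M\{p_j\}\times A_j$. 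Lemma~\ref{ee1}, applied with $t=a_i+1\ge t(n_i)$, gives $h^1(\PP^{n_i},\Ii_{A_j}\otimes\Omega^1_{\PP^{n_i}}(a_i+1))=0$ for each $j$, so each fiberwise evaluation $H^0(\Omega^1_{\PP^{n_i}}(a_i+1))\to\Omega^1_{\PP^{n_i}}(a_i+1)|_{A_j}$ is surjective. Under the K\"unneth identification $H^0(\Gg)\cong H^0(R)\otimes H^0(\Omega^1_{\PP^{n_i}}(a_i+1))$, the restriction $H^0(\Gg)\to H^0(\Gg|_{S_0})$ factors as the isomorphism $H^0(R)\otimes H^0(\Omega^1_{\PP^{n_i}}(a_i+1))\to\bigoplus_jR_{p_j}\otimes H^0(\Omega^1_{\PP^{n_i}}(a_i+1))$ followed by the direct sum of the fiberwise surjections, hence is surjective; so $h^1(\Ii_{S_0}\otimes\Gg)=0$. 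A symmetric argument with $\#A_j\ge\lceil e/n_i\rceil$, using the fiberwise injectivity from Lemma~\ref{ee1}, produces $h^0(\Ii_{S_0}\otimes\Gg)=0$ for $s\ge\tau_2$. Semicontinuity of $h^i(\Ii_Z\otimes\Gg)$ on the Hilbert scheme transfers each vanishing to a general $S$ of cardinality $s$.

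The step I expect to be most delicate is the multigraded input that $M$ general points of the product $Y=\prod_{j\ne i}\PP^{n_j}$ impose independent conditions on $\Oo_Y(a_1,\dots,\widehat{a_i},\dots,a_k)$: this is used to invert the first arrow in the K\"unneth factorization, and although a folklore maximal-rank fact, a careful proof requires a separate induction on the number of factors of $Y$ (degenerating one coordinate at a time). Once this is in place, the rest is a routine assembly of the Euler sequence, K\"unneth, Remark~\ref{ke3} (generalized to $\PP^{n_i}$), Lemma~\ref{ee1}, and semicontinuity.
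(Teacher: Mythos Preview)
Your proof is correct and follows essentially the same strategy as the paper: reduce via the Euler sequence to the vanishing of $h^i(\Ii_S\otimes\Gg)$, then specialize $S$ onto fibers of $\eta_i$ and apply K\"unneth together with Lemma~\ref{ee1}. The only organizational differences are that the paper first reduces to the critical values $s=\tau_1,\tau_2$ (via subset/superset arguments) and then uses a pure product set $S=A_1\times A_2$ --- your construction with all $A_j$ equal --- and that the paper simply asserts without comment the step you flag as delicate, namely that $M=h^0(R)$ general points of $Y$ impose independent conditions on $R$.
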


\begin{proof}
By the definition of $\mu _{\{i\},S,a_1,\dots ,a_k}$ and  the Euler's sequence it is sufficient to prove that either  $h^0(\Ii_S\otimes \pi _i^\ast (\Omega _{\PP^{n_i}}(a_i+1))\otimes \boxtimes _{j\ne i} \Oo_X(a_j))=0$ or $h^0(\Ii_S\otimes \pi _i^\ast (\Omega _{\PP^n_i}(a_i+1))\otimes \boxtimes _{j\ne i} \Oo_X(a_j))=0$. By the semicontinuity theorem for cohomology it is sufficient to find one subset $S$ such that $\#S=S$ and either  $h^0(\Ii_S\otimes \pi _i^\ast (\Omega _{\PP^n_i}(a_i+1))\otimes \boxtimes _{j\ne i} \Oo_X(a_j))=0$ or $h^0(\Ii_S\otimes \pi _i^\ast (\Omega _{\PP^{n_i}}(a_i+1))\otimes \boxtimes _{j\ne i}
\Oo_X(a_j))=0$.

\quad (a) Assume for the moment to have proved the theorem for the integers $s=\tau_1$ and $s=\tau_2$ and take $S_1$ (resp.
$S_2$)  such that
$\#S_1=\tau_1$ (resp. $\#S_2=\tau_2$) and either  $h^0(\Ii_{S_1}\otimes \pi _i^\ast (\Omega _{\PP^{n_i}}(a_i+1))\otimes \boxtimes
_{j\ne i}
\Oo_X(a_j))=0$ or
$h^0(\Ii_{S_1}\otimes \pi _i^\ast (\Omega _{\PP^{n_i}}(a_i+1))\otimes \boxtimes _{j\ne i} \Oo_X(a_j))=0$ (resp. either 
$h^0(\Ii_{S_2}\otimes \pi _i^\ast (\Omega _{\PP^{n_i}}(a_i+1))\otimes \boxtimes _{j\ne i}
\Oo_X(a_j))=0$ or
$h^0(\Ii_{S_2}\otimes \pi _i^\ast (\Omega _{\PP^{n_i}}(a_i+1))\otimes \boxtimes _{j\ne i} \Oo_X(a_j))=0$). By the
definition of
$\tau _1$ and $\tau _2$) we have $h^1(\Ii_{S_1}\otimes \pi _i^\ast (\Omega _{\PP^{n_i}}(a_i+1))\otimes \boxtimes _{j\ne i}
\Oo_X(a_j))=0$ and $h^0(\Ii_{S_2}\otimes \pi _i^\ast (\Omega _{\PP^{n_i}}(a_i+1))\otimes \boxtimes _{j\ne i} \Oo_X(a_j))=0$.
Assume for the moment $s<\tau_1$ and take any $S'\subset S_1$ such that $\#S'=s$. Since $h^1(\Ii_{S_1}\otimes \pi _i^\ast
(\Omega _{\PP^n_i}(a_i+1))\otimes \boxtimes _{j\ne i} \Oo_X(a_j))=0$,  $h^1(\Ii_{S'}\otimes \pi _i^\ast (\Omega
_{\PP^n_i}(a_i+1))\otimes \boxtimes _{j\ne i} \Oo_X(a_j))=0$. Now assume $s>\tau_2$. Take any $S''\supset S_2$ such that
$\#S''=s$. Since $h^0(\Ii_{S_2}\otimes \pi _i^\ast (\Omega _{\PP^{n_i}}(a_i+1))\otimes \boxtimes _{j\ne i} \Oo_X(a_j))=0$, 
$h^0(\Ii_{S'}\otimes \pi _i^\ast (\Omega _{\PP^{n_i}}(a_i+1))\otimes \boxtimes _{j\ne i} \Oo_X(a_j))=0$.

\quad (b) Assume $s=\tau_1$.  Permuting the factors of $X$ we may write $X \cong
\PP^{n_i}\times Y'$ with
$Y' =\prod _{j\ne i} \PP^{n_j}$. Take $S = A_1\times A_2$ with $A_1$ a general subset of $\PP^{n_i}$ with $\#S_1 =
\lfloor h^0(\Omega _{\PP^n_i}^1(a_i+1))/n_1\rfloor$ and $\#A_2=  \prod _{j\ne i} \binom{n_j+a_j}{n_j}$. By the generality of
$A_2$ we have
$h^1(Y',\Ii _{A_2}(b_1,\dots ,b_{k-1})) =0$, where $(b_1,\dots ,b_{k-1})$ is obtained from $(a_1,\dots ,a_k)$ deleting the
$i$-th entry
$a_i$. Since $A_1$ is general in $\PP^{n_i}$ and $a_i\ge t(n_i)-1$, Lemma \ref{ee1} gives $h^1(\PP^n{n_i},\Ii _{A_1}\otimes
\Omega ^1_{\PP^{n_i}}(a_i+1)) =0$. Since $S =A_1\times A_2$ we get $h^1(\Ii_S\otimes \pi _i^\ast (\Omega
_{\PP^{n_i}}(a_i+1))\otimes \boxtimes _{j\ne i} \Oo_X(a_j))=0$.

\quad (c) Assume $s=\tau_2$. We take $S +B_1\times A_2$ with $B_1$ a general subset of $\PP^{n_i}$ with  $\lceil h^0(\Omega
_{\PP^{n_i}}^1(a_i+1))/n_i\rceil$. Lemma \ref{ee1} gives $h^0(\PP^{n_i},\Ii _{B_1}\otimes \Omega _{\PP^{n_i}}^1(a_i+1)) =0$.
Repeat the proof of step (b).
\end{proof}

\section{Base point freeness}\label{BPfreeness}
Fix $X = \PP^{n_1}\times \cdots \times \PP^{n_k}$ with $k\ge 2$ and all $n_i>0$ for all $i$.

\begin{lemma}\label{f1}
Fix $(a_1,\dots ,a_k)\in \NN^k$ and let $Z\subset X$ be a zero-dimensional scheme such that $h^1(\Ii_Z(a_1,\dots ,a_k)) =0$. Set
$E:= Z_{\red}$. Fix $(c_1,\dots ,c_k)\in \NN^k$.

\quad (a) Fix $i\in \{1,\dots ,k\}$ and assume $c_j\ge a_j$ for all $j\neq i$ and $c_i>a_i$. Then
$|\Ii _Z(c_1,\dots ,c_k)|$ has no base points outside $\pi _i^{-1}(\pi _i(E)$.

\quad (b) Assume $c >a_i$ for all $i$. Then no point of $X\setminus E$ is a base point of $|\Ii _Z(c_1,\dots ,c_k)|$.

\end{lemma}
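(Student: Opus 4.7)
The plan is to construct, for any point $p$ in the alleged base-point-free locus, an effective divisor $D\subset X$ with $p\in D$ and $D\cap E=\emptyset$, and to extract a section of $\Ii_Z(c_1,\dots,c_k)$ nonvanishing at $p$ from the residuation sequence \eqref{residue}. Since $Z$ is zero-dimensional with support $E$, the condition $D\cap E=\emptyset$ forces $Z\cap D=\emptyset$ as schemes, so $\Res_D(Z)=Z$ and the twist of \eqref{residue} by $\Oo_X(c_1,\dots,c_k)$ reads
\begin{equation*}
0\to \Ii_Z(c_1,\dots,c_k)(-D)\to \Ii_Z(c_1,\dots,c_k)\to \Oo_D(c_1,\dots,c_k)\to 0.
\end{equation*}
As soon as the leftmost $h^1$ vanishes, the restriction to $D$ will be surjective on $H^0$; because $\Oo_X(c_1,\dots,c_k)$ is globally generated, any section of $\Oo_D(c_1,\dots,c_k)$ nonzero at $p$ will lift to the section we want.

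For part (a), the plan is to take $D=\pi_i^{-1}(H)\in |\Oo_X(\epsilon_i)|$ for a hyperplane $H\subset \PP^{n_i}$ containing $\pi_i(p)$ and disjoint from the finite set $\pi_i(E)$; this is possible because $\pi_i(p)\notin \pi_i(E)$ and $n_i\ge 1$. The residual twist becomes $\Ii_Z((c_1,\dots,c_k)-\epsilon_i)$, and since $(c_1,\dots,c_k)-\epsilon_i$ is coordinate-wise at least $(a_1,\dots,a_k)$ (using $c_j\ge a_j$ for $j\ne i$ and $c_i-1\ge a_i$), iterating part (1) of Proposition \ref{0bg1} along a chain of immediate descendants from $(a_1,\dots,a_k)$ up to $(c_1,\dots,c_k)-\epsilon_i$ yields $h^1(\Ii_Z((c_1,\dots,c_k)-\epsilon_i))\le h^1(\Ii_Z(a_1,\dots,a_k))=0$, and the opening paragraph closes case (a).

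For part (b) this coordinate-hyperplane trick can fail, since a point $p\in X\setminus E$ may well satisfy $\pi_i(p)\in \pi_i(E)$ for every $i$; to handle this the plan is to use a divisor of larger multidegree. Set $\mathbf{d}:=(c_1-a_1,\dots,c_k-a_k)$; by hypothesis every entry is $\ge 1$, so $\Oo_X(\mathbf{d})$ is very ample and in particular separates $p$ from each of the finitely many points of $E$. Hence the hyperplane in $|\Oo_X(\mathbf{d})|$ of divisors through $p$ is distinct from each of the finitely many hyperplanes of divisors through a point of $E$, and one can choose $D\in |\Oo_X(\mathbf{d})|$ with $p\in D$ and $D\cap E=\emptyset$. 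The residual twist then becomes exactly $\Ii_Z(a_1,\dots,a_k)$, whose $h^1$ vanishes by hypothesis, and the conclusion follows from the opening paragraph. The main obstacle is precisely this last case, where one genuinely needs $c_i>a_i$ in \emph{every} coordinate (not just in one) to make $\Oo_X(\mathbf{d})$ very ample and thereby access a divisor through $p$ that avoids $E$.
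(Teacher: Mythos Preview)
Your argument is correct. For part (a) it is essentially the paper's own proof: both choose $H=\pi_i^{-1}(M)\in|\Oo_X(\epsilon_i)|$ with $\pi_i(p)\in M$ and $M\cap\pi_i(E)=\emptyset$, and both conclude via the residual sequence (the paper phrases it as ``$h^0$ drops when one adds $o$ to $Z$'', you phrase it as surjectivity of $H^0(\Ii_Z(c))\to H^0(\Oo_H(c))$; these are equivalent). Your use of Proposition~\ref{0bg1}(1) to pass from $(a_1,\dots,a_k)$ to $(c_1,\dots,c_k)-\epsilon_i$ replaces the paper's preliminary reduction to $c_j=a_j$ for $j\ne i$, $c_i=a_i+1$, which is a cosmetic difference.

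For part (b) the routes genuinely diverge. The paper asserts that (b) ``is proved in a similar way, but it is also a formal consequence of part (a) applied $k$ times''. Taken literally, applying (a) once for each $i$ only yields that the base locus lies in $\bigcap_{i=1}^{k}\pi_i^{-1}(\pi_i(E))$, which in general strictly contains $E$ (e.g.\ two points in $\PP^1\times\PP^1$ in general position give a four-point intersection), so the ``formal consequence'' clause is not quite as immediate as stated; one must go back to a direct residual argument. Your approach does exactly this in one step: you take $D\in|\Oo_X(c_1-a_1,\dots,c_k-a_k)|$ through $p$ and disjoint from $E$, which exists because that line bundle is very ample, and then the residual twist is \emph{exactly} $\Ii_Z(a_1,\dots,a_k)$, whose $h^1$ vanishes by hypothesis. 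This is cleaner and makes transparent why the hypothesis $c_i>a_i$ for \emph{all} $i$ is the right one; your closing remark about the obstacle when $\pi_i(p)\in\pi_i(E)$ for every $i$ pinpoints precisely where the single-index argument of (a) is insufficient.
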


\begin{proof}
Since $\Oo_X(e_1,\dots ,e_k)$ has no base points for any $(e_1,\dots ,e_k)\in \NN^k$ to prove part (a) it is sufficient to
prove the case $c_j=a_j$ for all $j\ne i$ and $c_i=a_i+1$. Fix $o\in X\setminus \pi _i^{-1}(\pi _i(E)$. 
Fix a hyperplane $M$ of $\PP^{n_i}$ such that $\pi_i(o)\notin M$ and set $H:= \pi _i^{-1}(M)$. Since $H\cap Z=\emptyset$, we
have a residual exact sequence
\begin{equation}\label{eqf1}
0 \to \Ii_Z(a_1,\dots ,a_k)\to \Ii _{Z\cup \{o\}}((a_1,\dots ,a_k)+\epsilon _i)\to \Ii_{o,H}((a_1,\dots ,a_k)+\epsilon _i)\to
0
\end{equation}
Since $h^1(H,\Ii_{o,H}((a_1,\dots ,a_k)+\epsilon _i)=h^1(\Ii_Z(a_1,\dots ,a_k))=0$, \eqref{eqf1} gives $h^1(\Ii
_{Z\cup \{o\}}((a_1,\dots ,a_k)+\epsilon _i))=0$ and hence $h^0(\Ii
_{Z\cup \{o\}}((a_1,\dots ,a_k)+\epsilon _i))=h^0(\Ii_Z(a_1,\dots ,a_k))-1$.

Part (b) is proved in a similar way, but it is also a formal consequence of part (a) applied $k$ times.
\end{proof}

\begin{lemma}\label{f2}
Fix $(a_1,\dots ,a_k)\in \NN^k$ and an integer $s$ such that $0<s<\prod_{i=1}^{k} \binom{n_i+a_i}{n_i}$.
Then a general $T\in |\Ii_Z(a_1,\dots,a_k)|$ is irreducible.
\end{lemma}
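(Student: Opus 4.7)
The plan is to reduce the claim to the existence of at least one irreducible element of $|\Ii_Z(a_1,\dots,a_k)|$. Since the locus of irreducible divisors is Zariski-open in $|\Oo_X(a_1,\dots,a_k)|$, any non-empty intersection with the projective linear subspace $|\Ii_Z(a_1,\dots,a_k)|$ is automatically open and dense there, so a general element is irreducible. Note that the hypothesis $s < \prod_{i=1}^{k}\binom{n_i+a_i}{n_i} = h^0(\Oo_X(a_1,\dots,a_k))$ already guarantees $h^0(\Ii_Z(a_1,\dots,a_k)) \ge h^0(\Oo_X(a_1,\dots,a_k)) - s \ge 1$, hence $|\Ii_Z(a_1,\dots,a_k)|$ is non-empty; and interpret $Z$ as a general zero-dimensional subscheme of degree $s$.

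First I would verify that the reducible locus $\Sigma \subset |\Oo_X(a_1,\dots,a_k)|$ is a proper closed subvariety. Since the multigraded coordinate ring $\Rr = K[x_{ij}]$ is a UFD, every reducible section of multidegree $(a_1,\dots,a_k)$ lies in the image of one of the finitely many multiplication maps
\begin{equation*}
\mu_{b,c}\colon \PP H^0(\Oo_X(b_1,\dots,b_k)) \times \PP H^0(\Oo_X(c_1,\dots,c_k)) \to \PP H^0(\Oo_X(a_1,\dots,a_k)),
\end{equation*}
indexed by non-trivial decompositions $(a_i) = (b_i)+(c_i)$. The image of each $\mu_{b,c}$ is closed and irreducible of dimension at most $h^0(\Oo_X(b))+h^0(\Oo_X(c))-2$; a direct comparison of products of binomial coefficients gives that this is strictly less than $h^0(\Oo_X(a_1,\dots,a_k))-1$ whenever $(a_1,\dots,a_k)$ is sufficiently positive, so $\Sigma$ is proper and $U:=|\Oo_X(a_1,\dots,a_k)|\setminus \Sigma$ is dense open.

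Next I would produce an irreducible divisor through $Z$ via an incidence variety. Let
\begin{equation*}
I := \{(T, p_1,\dots,p_s) \in |\Oo_X(a_1,\dots,a_k)| \times X^s : p_j \in T \text{ for all } j\}.
\end{equation*}
The projection $I \to |\Oo_X(a_1,\dots,a_k)|$ realizes $I$ as an irreducible variety with fibers $T^s$ of dimension $s(\dim X-1)$, so $\dim I = h^0(\Oo_X(a_1,\dots,a_k))-1 + s(\dim X-1)$. Rewriting the hypothesis $s \le h^0(\Oo_X(a_1,\dots,a_k))-1$ as $\dim I \ge s\dim X = \dim X^s$ shows that the other projection $I \to X^s$ is dominant (the fiber over a general $(p_1,\dots,p_s)$ equals $|\Ii_{\{p_1,\dots,p_s\}}(a_1,\dots,a_k)|$, of the expected dimension). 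The restriction $I \cap (U\times X^s) \to X^s$, being the restriction of a dominant map to a dense open subset, is still dominant; hence for general $(p_1,\dots,p_s) \in X^s$ there is an irreducible $T \in U$ passing through each $p_j$. Taking $Z = \{p_1,\dots,p_s\}$ yields $U \cap |\Ii_Z(a_1,\dots,a_k)| \neq \emptyset$, and the opening observation concludes the proof.

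The main obstacle is the dimension bound on $\Sigma$, which breaks down for degenerate multidegrees (for instance, every section of $\Oo_X(2,0,\dots,0)$ factors as a product of two sections of $\Oo_X(1,0,\dots,0)$, so $\Sigma$ is the whole linear system): the statement is tacitly restricted to regimes, such as all $a_i\ge 1$ or a more refined positivity condition, in which irreducible divisors of the given multidegree exist at all, and the incidence argument then does the rest.
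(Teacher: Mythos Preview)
Your argument is correct and takes a different route from the paper's. The paper argues directly inside $|\Ii_Z(a_1,\dots,a_k)|$: it first deforms away any multiple component (using that the linear system of the component is base-point free), then supposes the general $T$ splits as $T_1\cup\cdots\cup T_e$, invokes a monodromy argument on the general $Z$ to force all $T_j$ to have the same multidegree $(a_1/e,\dots,a_k/e)$ and to contain exactly $s/e$ points of $Z$ each, and finally asserts the inequality $\prod_i\binom{n_i+a_i}{n_i} > e\prod_i\binom{n_i+a_i/e}{n_i}$ to reach a contradiction. You instead reduce everything to the complete linear system: once the irreducible locus $U\subset|\Oo_X(a_1,\dots,a_k)|$ is nonempty (hence dense open), the incidence correspondence transports this into $|\Ii_Z(a_1,\dots,a_k)|$ for general $Z$. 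Your approach separates the two ingredients---existence of an irreducible member of $|\Oo_X(a_1,\dots,a_k)|$ versus generality of $Z$---more transparently; the paper's monodromy step extracts the extra structural datum that any hypothetical splitting must be balanced, but this is not reused.

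The caveat you raise at the end is exactly right and applies equally to the paper's argument: for a multidegree such as $(2,0)$ on $\PP^1\times\PP^1$ every divisor is reducible, the paper's asserted inequality fails ($3\not>2\cdot2$), and the lemma is false as literally stated. The correct hypothesis is that $|\Oo_X(a_1,\dots,a_k)|$ has an irreducible member, equivalently $h^1(\Oo_X(-a_1,\dots,-a_k))=0$; this holds automatically once $(a_1,\dots,a_k)\in(\NN\setminus\{0\})^k$ and $k\ge2$, which is the standing assumption in all subsequent applications of the lemma. One small point: the irreducibility of your incidence variety $I$ is cleanest to justify via the projection $I\to X^s$, which is a $\PP^{\,h^0-s-1}$-bundle over the dense open locus where the points impose independent conditions, since the fibers $T^s$ of the other projection can be reducible when $T$ is.
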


\begin{proof}
First assume that $T$ has a multiple component $W$. Since the linear system $|W|$ has no base point we get a
more general element of $|\Ii_Z(a_1,\dots,a_k)|$ with a component with smaller multiplicity. Iterating this procedure we get
that a general $T$ has no multiple components. Now assume that $T=T_1\cup \cdots \cup T_e$ with all $T_j$ distinct irreducible
components and $e>1$. A monodromy argument shows that, for a general $Z$, all $T_j$ have the same multidegree and contain the
same number of elements of $Z$ and that no element of $Z$ is contained in $2$ different irreducible components of $T$. Hence
$s$ and each $a_i$ are divided by $e$. Since $\prod_{i=1}^{k}\binom{n_i+a_i}{a_i} -s > e(-s/e+\prod_{i=1}^{k}
\binom{n_i+(a_i/e)}{n_i})$, we get a contradiction. 
\end{proof}

\begin{lemma}\label{f2}
Fix $(a_1,\dots ,a_k)\in (\NN\setminus \{0\})^k$ and an integer $s$ such that $0<s<\prod_{i=1}^{k} \binom{n_i+a_i}{n_i}$. Let
$\Bb$ be the base locus $\Bb$ of $|\Ii_Z(a_1,\dots,a_k)|$. Set
$e:=
\min
\{n_1+\cdots +n_k,\prod_{i=1}^{k}\binom{n_i+a_i}{n_i} -s\}$. Then $\Bb$ has codimension at least $e$. If $\Bb$ has
codimension $e <n_1+\cdots +n_k$, then $\Bb$ is irreducible.
\end{lemma}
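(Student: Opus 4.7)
The plan is to identify $\Bb$ with a linear section of $X$ under its Segre--Veronese embedding, and then apply a Bertini-type transversality argument. Since every $a_i\ge 1$, the line bundle $\Oo_X(D)$ with $D=(a_1,\dots,a_k)$ is very ample, so its complete linear system defines an embedding $\Phi\colon X\hookrightarrow \PP^N$ with nondegenerate image, where $N:=\prod_{i=1}^k\binom{n_i+a_i}{n_i}-1$. Under $\Phi$, divisors in $|\Ii_Z(D)|$ correspond exactly to hyperplanes of $\PP^N$ containing $\Phi(Z)$; the scheme-theoretic intersection of all these hyperplanes inside $\PP^N$ is precisely the linear span $L:=\langle\Phi(Z)\rangle$, and hence, scheme-theoretically,
$$
\Bb \;\cong\; L\cap \Phi(X).
$$
For a general $Z$ with $\#Z=s<\prod$, we have $s\le N$, and the points $\Phi(Z)$ lie in linear general position on the nondegenerate image $\Phi(X)$, so $\dim L=s-1$; the expected dimension of $L\cap \Phi(X)$ in $\PP^N$ is therefore $n+(s-1)-N=n-(\prod-s)$.

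I would then prove, by induction on $s$, that $L\cap \Phi(X)$ has pure dimension $\max\{0,\,n-(\prod-s)\}$. The inductive step adds one general point at a time: given $Z'\subset X$ general of size $s-1$ with $L':=\langle\Phi(Z')\rangle$ meeting $\Phi(X)$ in the expected way, one sets $L:=\langle L',\Phi(q)\rangle$ for a general $q\in X$ and checks, via Bertini, that the new $(s-1)$-plane meets $\Phi(X)$ in one higher dimension than $L'\cap\Phi(X)$, reducing just to $\Phi(Z)$ when $n-(\prod-s)$ becomes negative. This will yield $\mathrm{codim}\,\Bb \ge \min\{n,\,\prod-s\}=e$.

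For the irreducibility claim when $e<n$ (equivalently $n-(\prod-s)\ge 1$): in that case $L\cap\Phi(X)$ is of positive dimension, cut out on the irreducible $n$-fold $\Phi(X)$ by $\prod-s<n$ hyperplane conditions constrained to vanish at the $s$ general points $\Phi(Z)$. Repeated application of Bertini's irreducibility theorem for the restricted linear system then gives that $\Bb$ is irreducible. The principal obstacle is justifying Bertini in this restricted setting: the hyperplanes in question are not fully general in $|\Oo_{\PP^N}(1)|$ but constrained to pass through the $s$ general points $\Phi(Z)$. One must verify that this sublinear subsystem is base-point free on $\Phi(X)$ off $\Phi(Z)$ and has irreducible generic hyperplane section; this can be handled by a monodromy/incidence argument in $X^s\times \PP^N$ in the spirit of the proof of the preceding lemma, using the strict inequality $s<\prod_{i=1}^k\binom{n_i+a_i}{n_i}$ to guarantee enough freedom.
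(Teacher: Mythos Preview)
Your identification $\Bb \cong L\cap\Phi(X)$ via the Segre--Veronese embedding is correct and is a natural way to think about the problem. However, the paper takes a quite different and much shorter route, and your induction has a genuine gap that the paper's method sidesteps.

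\textbf{What the paper does.} Rather than working with a general $Z$ and a constrained linear system, the paper first chooses $e$ \emph{general} elements of the full linear system $|\Oo_X(a_1,\dots ,a_k)|$ and lets $E$ be their intersection. Classical Bertini (for the unconstrained very ample system) gives immediately that $E$ is smooth of codimension $e$, and irreducible when $e<n_1+\cdots+n_k$. One then \emph{specializes} $Z$ to $s$ (general) points of $E$. For this $Z$ the $e$ chosen divisors lie in $|\Ii_Z(a_1,\dots ,a_k)|$, so $\Bb\subseteq E$; the codimension and irreducibility statements follow at once. Semicontinuity transfers the codimension bound to the general $Z$.

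\textbf{Where your induction breaks.} Your step ``adding one general point increases $\dim(L\cap\Phi(X))$ by exactly one'' is fine in the range $\prod-s<n$ (there the Krull lower bound $\dim(L\cap\Phi(X))\ge n-(\prod-s)$ together with the obvious upper bound $\le \dim(L'\cap\Phi(X))+1$ pins the dimension). But in the range $\prod-s\ge n$ (equivalently $e=n$) you need $\dim(L\cap\Phi(X))=0$ to stay at $0$ as $s$ grows, and your upper bound only gives $\le 1$; there is no lower bound forcing $0$. The phrase ``reducing just to $\Phi(Z)$ when $n-(\prod-s)$ becomes negative'' is exactly the missing content --- it is essentially Proposition~\ref{f3}, which the paper proves separately using the uniform position principle, not by this induction. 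Without it you also lack the base case $s=\prod-n$ needed to start the induction in the positive-dimensional range.

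\textbf{Comparison.} Your approach works directly with the general $Z$ and makes the linear-algebra picture explicit, but it forces you to prove Bertini-type statements for linear systems constrained to pass through $s$ points --- the obstacle you yourself flag --- and to supply the $0$-dimensional base case independently. The paper's specialization trick removes both difficulties in one stroke: by choosing the divisors first (freely, so ordinary Bertini applies) and only then placing $Z$ on their intersection, one never has to control a constrained system at all.
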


\begin{proof}
Since $a_i>0$ for all $i$,  $\Oo_X(a_1,\dots ,a_k)$ is very ample. Let $E$ be the intersection of $e$
general elements of $|\Oo_X(a_1,\dots ,a_k)|$. By Bertini's theorem $E$ is smooth of codimension $e$ and irreducible if
$e<n_1+\cdots +n_k$. If $e<n_1+\cdots +n_k$ take, instead of $Z$, $s$ general elements of $E$. If $e=n_1+\cdots +n_k$ take
instead of $Z$ any $s$ elements of $E$.
\end{proof}

\begin{proposition}\label{f3}
Fix $(a_1,\dots ,a_k)\in (\NN\setminus \{0\})^k$ and an integer $s$ such that $0<s<\prod_{i=1}^{k}
\binom{n_i+a_i}{n_i}-n_1-\cdots -n_k$. Then $Z$ is the scheme-theoretic base locus $\Bb$ of $|\Ii_Z(a_1,\dots,a_k)|$.
\end{proposition}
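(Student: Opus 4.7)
Write $n:=n_1+\cdots+n_k=\dim X$ and $P:=\prod_{i=1}^{k}\binom{n_i+a_i}{n_i}=h^0(\Oo_X(a_1,\dots,a_k))$; the hypothesis rewrites as $P-s>n$. I would first invoke the second Lemma~\ref{f2}: with $e:=\min\{n,P-s\}$, the hypothesis gives $e=n=\dim X$, so $\Bb$ has codimension at least $n$ in $X$ and hence is zero-dimensional. The inclusion $Z\subseteq\Bb$ is scheme-theoretic, since every $\sigma\in H^0(\Ii_Z(a_1,\dots,a_k))$ vanishes on $Z$, and therefore the image of the evaluation map $H^0(\Ii_Z(a_1,\dots,a_k))\otimes\Oo_X(-a_1,\dots,-a_k)\to\Oo_X$ (which by definition equals $\Ii_\Bb$) lies in $\Ii_Z$.

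To upgrade the inclusion to the equality $\Bb=Z$, it suffices to show that the sheaf $\Ii_Z(a_1,\dots,a_k)$ is globally generated, for then the above evaluation map surjects onto $\Ii_Z$ and $\Ii_\Bb=\Ii_Z$. Global generation decomposes pointwise: at each $p\notin Z$ some section of $\Ii_Z(a_1,\dots,a_k)$ must be non-zero at $p$, and at each $p\in Z$ (reduced) the differential map $H^0(\Ii_Z(a_1,\dots,a_k))\to T_p^\ast X$ must be surjective. A standard short-exact-sequence argument, combined with the generic vanishing $h^1(\Ii_Z(a_1,\dots,a_k))=0$, reformulates these two conditions as the $h^1$-vanishings
\[
h^1(\Ii_{Z\cup\{p\}}(a_1,\dots,a_k))=0\ \,(p\in X\setminus Z),\qquad h^1(\Ii_{Z\cup 2p}(a_1,\dots,a_k))=0\ \,(p\in Z),
\]
on subschemes of degree $s+1$ and $s+n$ respectively (the first-order neighborhood $2p$ has length $n+1$ and, since $p\in Z$, swallows the reduced point $p$ of $Z$). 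By hypothesis both degrees are $\leq s+n<P$.

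\textbf{The main obstacle} is establishing these $h^1$-vanishings for these specific non-generic configurations. For $Z\cup\{p\}$, using the transitive action of $\prod_{i=1}^{k}\mathrm{PGL}(n_i+1)$ on $X$ I would construct, for each fixed $p\in X$, a generic $(s+1)$-tuple containing $p$; this shows that the open subset $U_p\subseteq X^s$ where $h^1(\Ii_{Z\cup\{p\}}(a_1,\dots,a_k))=0$ is non-empty, and therefore dense, and the set-theoretic part of $\Bb=Z$ then follows from a dimension count on the incidence locus $\{(Z,p):p\in\Bb(Z)\setminus Z\}\subseteq X^s\times X$. The fat-point vanishing is the harder half: it is the multiprojective analogue of Alexander--Hirschowitz for one double point sitting on a generic configuration of simple points. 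My approach would be a Horace-type specialization using the residual sequence~\eqref{residue} with a divisor $D\in|\Oo_X(\epsilon_i)|$ through $p$, combined with the multiplication-map surjectivities of Propositions~\ref{bg1} and~\ref{bg2} and the very ampleness of $\Oo_X(a_1,\dots,a_k)$ (guaranteed by $a_i\ge 1$, which already ensures that a single double point imposes the expected $n+1$ independent conditions on $|\Oo_X(a_1,\dots,a_k)|$).
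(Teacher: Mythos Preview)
Your reformulation via global generation of $\Ii_Z(a_1,\dots,a_k)$ is correct, and the appeal to Lemma~\ref{f2} for $\dim\Bb=0$ matches the paper. But the incidence argument you propose for the set-theoretic equality does not close. Knowing only that each $U_p\subset X^s$ is a dense open (codimension $\ge 1$) bounds the incidence locus $\{(Z,p):p\in\Bb(Z)\setminus Z\}$ by $\dim X^s+\dim X-1=(s+1)n-1$; projecting to $X^s$ this still permits a generic fibre of dimension up to $n-1$, and you already knew the fibre is finite from Lemma~\ref{f2}, so nothing new is gained. To push such a count through you would need the complement of $U_p$ to have codimension $>n$ in $X^s$, which you have not argued. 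The fat-point half you correctly flag as the hard part and leave essentially open: the Horace step you gesture at is not routine here, and Propositions~\ref{bg1}--\ref{bg2} concern multiplication maps rather than the postulation of a double point among simple ones.

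The paper bypasses both difficulties by \emph{specializing} $Z$ rather than arguing for an arbitrary general $Z$. One takes $C\subset X$ the smooth connected curve cut out by $n-1$ general members of $|\Oo_X(a_1,\dots,a_k)|$, lets $E=C\cap D$ for one further general member, and sets $Z=A\subset E$ with $\#A=s$ (noting $\#E\ge s+2$). The $n$ defining hypersurfaces already lie in $|\Ii_A(a_1,\dots,a_k)|$ and, by Bertini, meet transversally in the reduced scheme $E$; hence $\Bb\subseteq E$ scheme-theoretically, which immediately gives reducedness of $\Bb$ at every point of $A$ with no fat-point computation whatsoever. For $\Bb_{\red}=A$ the paper views $E$ as a general hyperplane section of the embedded curve $C$ and invokes the uniform position principle (\cite{acgh}, \cite{R}): an extra base point $o\in E\setminus A$ would force every $(s+1)$-subset of $E$ to impose only $s$ conditions on $|\Oo_X(a_1,\dots,a_k)|$, a contradiction. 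This specialization-plus-uniform-position idea is the ingredient your proposal lacks.
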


\begin{proof}
Lemma \ref{f2} shows that $\dim \Bb =0$. We first prove that each $o\in Z$ is a degree $1$ connected component of $\Bb$. As
in the proof of Proposition \ref{f2} it is sufficient to use that by Bertini's theorem the intersection of $n_1+\cdots +n_k$
general elements of $|\Ii_Z(a_1,\dots,a_k)|$ is smooth and it has dimension $0$. Thus we only need to check that $\Bb =Z$ as a
set. Let $C\subset X$ be the intersection of $n_1+\cdots +n_k-1$ general elements of $|\Oo_X(a_1,\dots ,a_k)|$. Since
$\Oo_X(a_1,\dots ,a_k)$ is very ample, $C$ is a smooth and connected curve. Let $E$ be the intersection of $C$ with a
general element of $|\Oo_X(a_1,\dots ,a_k)|$. See
$C$ embedded in a projective space $\PP^m$ by the restriction to $C$ of the linear system system $|\Oo_X(a_1,\dots ,a_k)|$.
Fix $A\subset E$ such that $\#A =s$. Note that $\#E\ge s+2$. The  generic hyperplane
section of $C\subset \PP^m$ is in uniform position in characteristic $0$ (see \cite{acgh}, pag 109), and in positive characteristic (see \cite [Th. 0.1]{R}). Thus each $s+2$ of the points of $E$ has the
same properties with respect to $|\Oo_X(a_1,\dots ,a_k)|$. Thus if
$|\Ii _A(a_,\dots ,a_k)|$ has a base locus $\Dd$ containing at least one point $o\in E\setminus A$, then it would have
$h^0(\Ii _A(a_1,\dots ,a_k)) < h^0(\Oo_X(a_1,\dots ,a_k)) -1$, a contradiction. 
\end{proof}

\begin{corollary}\label{f4}
Fix $(a_1,\dots ,a_k)\in (\NN\setminus \{0\})^k$ and an integer $s$ such that $0<s<\prod_{i=1}^{k} \binom{n_i+a_i}{n_i}$.
Fix an integer $i\in \{1,\dots,k\}$. Then $Z$ is the scheme-theoretic base locus of $|\Ii_Z((a_1,\dots,a_k)+\epsilon_i)|$.
\end{corollary}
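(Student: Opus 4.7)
The plan is to deduce Corollary~\ref{f4} directly from Proposition~\ref{f3}, applied at the shifted multidegree $(a_1,\dots,a_k)+\epsilon_i$. Set
$$M:=\prod_{j=1}^{k}\binom{n_j+a_j}{n_j},\qquad M':=\binom{n_i+a_i+1}{n_i}\prod_{j\ne i}\binom{n_j+a_j}{n_j}=h^0(\Oo_X((a_1,\dots,a_k)+\epsilon_i));$$
Pascal's identity $\binom{n_i+a_i+1}{n_i}=\binom{n_i+a_i}{n_i}+\binom{n_i+a_i}{n_i-1}$ gives
$$M'-M=\binom{n_i+a_i}{n_i-1}\prod_{j\ne i}\binom{n_j+a_j}{n_j}.$$
Proposition~\ref{f3} at the multidegree $(a_1,\dots,a_k)+\epsilon_i$ requires the hypothesis $0<s<M'-(n_1+\cdots+n_k)$, whereas we are given only $0<s<M$. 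So the entire argument reduces to the purely numerical inequality $M'-M\ge n_1+\cdots+n_k$.

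For this inequality I would combine two elementary lower bounds that use the standing hypotheses $a_j\ge 1$, $n_j\ge 1$, $k\ge 2$ from the beginning of Section~\ref{BPfreeness}. First, $\binom{n_i+a_i}{n_i-1}\ge n_i$: this is an equality when $n_i=1$, and for $n_i\ge 2$ it follows from monotonicity in $a_i$ together with the identity $\binom{n_i+1}{n_i-1}=\binom{n_i+1}{2}\ge n_i$. Second, $\binom{n_j+a_j}{n_j}\ge n_j+1$ for every $j$, and expanding coefficient-wise gives $\prod_{j\ne i}(n_j+1)\ge 1+\sum_{j\ne i}n_j$. Multiplying these,
$$\binom{n_i+a_i}{n_i-1}\prod_{j\ne i}\binom{n_j+a_j}{n_j}\ge n_i\Bigl(1+\sum_{j\ne i}n_j\Bigr)\ge n_i+\sum_{j\ne i}n_j=n_1+\cdots+n_k,$$
where the last step uses $n_i\ge 1$ together with $\sum_{j\ne i}n_j\ge 1$ (forced by $k\ge 2$ and each $n_j\ge 1$). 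This yields the required inequality, and Proposition~\ref{f3} applied at $(a_1,\dots,a_k)+\epsilon_i$ then identifies $Z$ with the scheme-theoretic base locus of $|\Ii_Z((a_1,\dots,a_k)+\epsilon_i)|$.

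The only delicate point in the plan is the numerical step, which is tight in the boundary case $k=2$, $n_1=n_2=a_1=a_2=1$ (where $M'-M=2=n_1+n_2$). The hypothesis $a_j\ge 1$ for every $j$ is genuinely used, since without it the bound $\binom{n_j+a_j}{n_j}\ge n_j+1$ collapses. Beyond this elementary verification, the full scheme-theoretic statement is delivered wholesale by Proposition~\ref{f3}, so I do not foresee any real geometric obstacle.
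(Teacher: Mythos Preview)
Your proposal is correct and follows essentially the same route as the paper: both reduce to Proposition~\ref{f3} applied at the shifted multidegree $(a_1,\dots,a_k)+\epsilon_i$, and both verify the same numerical inequality $M'-M\ge n_1+\cdots+n_k$ via the bounds $\binom{n_j+a_j}{n_j}\ge n_j+1$ and $\prod_{j\ne i}(n_j+1)\ge 1+\sum_{j\ne i}n_j$. Your write-up is in fact somewhat more careful with the binomial bookkeeping than the paper's own terse proof.
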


\begin{proof}
By Proposition \ref{f3} it is sufficient to check that $h^0(\Ii_Z((a_1,\dots,a_k)+\epsilon_i))>n_1+\cdots +n_k$. Since
$s<\prod_{i=1}^{k} \binom{n_i+a_i}{n_i}$ it is sufficient to use that $(\prod_{h\ne i} \binom{n_h+a_h}{n_h}\times
\binom{n_i+a_i-1}{n_i-1} \ge \prod_{h\ne i} (n_h+1)\times \binom{n_i+a_i-1}{n_i-1}\ge n_1+\cdots +n_k$.
\end{proof}

We add an example describing why we made some assumptions 
showing that our results holds even in cases in which the multihomogeneous ideal is not generated at that level. 

\begin{example}\label{e1e1}
Fix a positive integer $s$ and $(a_1,\dots ,a_k)\in \NN$. Let $S\subset X$ be a general subset. Since $s>0$, a necessary
condition for the base point freeness of $|\Ii_S(a_1,\dots ,a_k)|\ne \emptyset$. Since $S$ is general, $|\Ii_S(a_1,\dots
,a_k)|\ne \emptyset$ if and only if $s<\prod_{i=1}^{k}\binom{n_i+a_i}{n_i}$. Set $z:= \prod_{i=1}^{k}\binom{n_i+a_i}{n_i}
-s$. If $z>0$ the generality of $S$ implies $h^1(\Ii_S(a_1,\dots ,a_k)) =0$. Thus assuming a vanishing theorem for the
multidegree for which we want to prove base point freeness is not very restrictive. However, if $a_i=0$ for some $i$, then any
$D\in |\Ii_S(a_1,\dots ,a_k)|$ vanishes on the set $\pi_i^{-1}\pi_i(S) \ne S$, because $s>0$. Thus we always assumed in this
section $(a_1,\dots ,a_k)\in (\NN\setminus \{0\})^k$. Since $S\ne \emptyset$ and $\dim S =0$, to get that $S$ is
set-theoretically intersection of elements of $|\Ii_S(a_1,\dots ,a_k)|$ we need $z\ge n_1+\dots+n_k$. Assume $z= n_1+\cdots
+n_k$. To get that $S$ is scheme-theoretically the base locus of $|\Ii_S(a_1,\dots ,a_k)|$ we need that $S$ is
(scheme-theoretically) the complete intersection of $n_1+\cdots +n_k$ elements of $|\Ii_S(a_1,\dots ,a_k)|$. This is sometimes
possible (e.g. if $s=k=2$, $n_1=n_2=1$), but almost always it is impossible, just because the self-intersection of
$n_1+\cdots +n_k$ forms of multidegree $(a_1,\dots ,a_k)$ is bigger (usually much bigger) than $n_1+\cdots +n_k$. For
instance take $k=2$, $n_1=n_2=1$ and $a_1=a_2=2$. Thus $n_1+n_2=2$, $h^0(\Oo _X(2,2)) =9$ (and hence we should take $s=7$),
but $\Oo_X(2,2)\cdot\Oo_X(2,2) =8$. 
\end{example}

\begin{remark}\label{e1e2}
Take $(a_1,\dots ,a_k)\in (\NN\setminus \{0\})^k$ and a positive integer $s$ such that $s$ is smaller than the intersection
number
of $n_1+\cdots +n_k$ copies of $\Oo_X(a_1,
\dots ,a_k)$. Fix any $S\subset X$ such that $\#S=s$ and $\Ii_S(a_1,\dots ,a_k)$ is globally generated. The assumption on $s$
implies that the image of $X$ by the rational map induced by $|\Ii_S(a_1,\dots ,a_k)|$ has image $n_1+\cdots+n_k$ and hence it
is generically finite.
\end{remark}


\begin{thebibliography}{99}

\bibitem{acgh} E. Arbarello, M. Cornalba, P. Griffiths, J. Harris, Geometry of algebraic curves, Vol. I, Springer-Verlag, New York, 1985.


  
 \bibitem{b0} E. Ballico, Generators for the homogeneous ideal of $s$ general points in $\PP_3$. J. Algebra 106 (1987), no. 1, 46--52.

\bibitem{BES} C. Berkesch, D.  Erman, G. G. Smith, Virtual Resolutions for a Product of Projective Spaces,
Algebraic Geometry, Open AccessVolume 7, Issue 4, Pages 460 - 4812020

\bibitem{cco} E. Carlini, M.V. Catalisano and A. Oneto, On the Hilbert function of general fat points in $\PP^1\times \PP^1$, Michigan J. Math. 69 (2020), no. 3, 601--632.

\bibitem{cgg1}  M.V. Catalisano, A. V. Geramita, A. Gimigliano, Higher secant varieties of Segre-Veronese varieties, Projective varieties with unexpected properties, 81--107, Walter de Gruyter GmbH \& Co. KG, Berlin, (2005).


\bibitem{FGM2018} G. Favacchio, E. Guardo, J. Migliore, On the arithmetically Cohen-Macaulay property for sets of points in multiprojective spaces, Proceedings of the American Mathematical Society. (2018), vol. 146, p. 2811-2825, ISSN: 0002-9939, doi: 10.1090/proc/13981

\bibitem{FM1} G. Favacchio,  J. Migliore, Multiprojective spaces and the arithmetically Cohen-Macaulay property,  Math. Proc. Cambridge Philos., 2019, 166(3), pp. 583--597

\bibitem{FM2} G. Favacchio,  J. Migliore, The ACM property for unions of lines in $\mathbb{P}^1\times \mathbb{P}^2$, J. Pure Appl. Algebra, Vol. 225(11), November 2021 Article number 106739

\bibitem{GLM} J. Gao, Y. Li, M. C. Loper, A. Mattoo,  Virtual complete intersections in $\mathbb{P}^1\times \mathbb{P}^1$, J. Pure Appl. Algebra, 225 (1), art. no. 106473.

\bibitem{gm} A. V. Geramita, P. Maroscia, The ideal of forms vanishing at a finite set of points in $\mathbb {P}^n$, J. Algebra 90 (1984), no. 2, 528--525.

\bibitem{GMR1992}
S. Giuffrida, R. Maggioni, A. Ragusa, On the postulation of 0-dimensional subschemes on a smooth quadric, Pacific J. Math. {\bf 155} (1992), no. 2, 251--282.

\bibitem{GMR1994}
S. Giuffrida, R. Maggioni, A. Ragusa,
Resolutions of zero-dimensional subschemes of a smooth quadric, 
Zero-dimensional schemes (Ravello, 1992), 
191--204, de Gruyter, Berlin, 1994.  
\bibitem{GMR1996}
S. Giuffrida, R. Maggioni, A. Ragusa,
Resolutions of generic points lying on a smooth quadric, 
Manuscripta Math. {\bf 91} (1996), no. 4, 421--444.

\bibitem{GMZ2013} 
S. Giuffrida, R. Maggioni, G. Zappal\`a,  
Scheme-theoretic complete intersections in $\mathbb{P}^1 \times \mathbb{P}^1$, Comm. Algebra {\bf 41} (2013), no. 2, 532--551.

 


\bibitem{GuVT2008}  E. Guardo, A. Van Tuyl, ACM sets of points in multiprojective space, Collect. Math. {\bf 59} (2008), no. 2, 191-–213.


\bibitem{GVbook2015} E. Guardo, A. Van Tuyl, Arithmetically Cohen-Macaulay Sets of Points in $\mathbb{P}^1 \times \mathbb{P}^1$,  Springerbriefs in Math., p. 1--134, Springer, ISBN: 978-3-319-24164-7, ISSN: 2191-8198, doi: 10.1007/978-3-319-24166-1

\bibitem{HNVT} M. Harada, M. Nowroozi, A. Van Tuyl, Virtual resolutions of points in  $\mathbb{P}^1\times \mathbb{P}^1$, Preprint 2021, https://arxiv.org/abs/2106.02759

\bibitem{hs} A. Hirschowitz, C. Simpson, La r\'{e}solution minimale de l'id\'{e}al d'un arrangement g\'{e}n\'{e}ral d'un grand nombre de points dans $\PP^n$, Invent. Math. 126 (1996), no. 3, 467--503.


\bibitem{ida} M. Id\`{a}, On the homogeneous ideal of the generic union of lines in $\PP^3$, J. Reine Angew. Math. 403 (1990), 67--153.



\bibitem{LL}  T. N.K. Linh , Le N. Long, N. T.Hoa, G. Nhi, P. T. T.  Nhan, The  Kahler different of a set of points in $\mathbb{P}^m\times \mathbb{P}^n$, Preprint 2021, https://arxiv.org/abs/2107.02231.

\bibitem{mar} M. Maruyama, Elementary transformation in the theory of algebraic vector bundles. Algebraic Geometry, Pro. La Rabida 1981, 241--266, Lect. Notes in Math. 961, Springer-Verlag, Berlin, 1982.

\bibitem{R} J. Rathmann, The uniform position principle for curves in characteristic $p$, Mathematische Annalen
Volume 276, Issue 4, December 1987, Pages 565--579
\bibitem{msMultigraded} M. Sahin, I. Soprunov, Multigraded Hilbert functions and toric complete intersection codes, J. Alg., Vol. 459, 1 August 2016, 446--467


\bibitem{tv} N. V. Trung,  G. Valla, The Cohen-Macaulay type of points in generic position. J. Algebra 125
(1989), 110--119.

\bibitem{TesiAdam}  A. Van Tuyl,
Sets of Points in Multi-Projective Spaces and their Hilbert Function.
PhD Thesis, Queen's University (2001).
 
\bibitem{VT1} A. Van Tuyl, The border of the Hilbert function of a set of points in $\mathbb{P}^{n_1} \times \dots \times \mathbb{P}^{n_k}$, J. Pure and Applied Algebra, Open Access, Volume 176, Issue 2-3, Pages 223 -- 247, 24 December 2002

\bibitem{VT2} A. Van Tuyl, The Hilbert functions of ACM sets of points in $\mathbb{P}^{n_1} \times \dots \times \mathbb{P}^{n_k}$, J. Algebra, Vol. 264(2), Pages 420 -- 441, 15 June 2003
\bibitem{VT3} A. Van Tuyl, The defining ideal of a set of points in multi-projective space, J. London Math. Soc. (2) 72 (2005) 73--90

\end{thebibliography}
\end{document}